\title{Generically stable  regular types}
\date{}
\author{Predrag
Tanovi\'c\thanks{Supported by the Ministry of Education,  Science
and Technological Development of Serbia}\\Mathematical Institute
SANU, Belgrade, Serbia}
\newcommand{\nor}{\makebox[1em]{$\not$\makebox[.8em]{$\perp$}}\,}
\def \nmodels {\mathop{\not\models}}
\def \nwor {\mathop{\not\perp}^w}
\def \wor  {\perp\!\!\!{^w}}
\def \tp {{\rm tp}}
\def \Sem {{\rm Sem}}
\def \cl {{\rm cl}}
\def \strok {\upharpoonright}
 \newtheorem{thm}{Theorem}
 \newtheorem{prop}{Proposition}[section]
 \newtheorem{lem}[prop]{Lemma}
 \newtheorem{thm1}[prop]{Theorem}
 \newtheorem{cor}[prop]{Corollary}
  \newtheorem{fact}[prop]{Fact}
   \newtheorem{que}{Question}
\theoremstyle{definition}
 \newtheorem{defn}[prop]{Definition}
 \newtheorem{rmk}[prop]{Remark}
\begin{document}
\maketitle

\begin{abstract}We study non-orthogonality of symmetric, regular types and show that it preserves generic stability and   is an equivalence relation on the set of all generically stable, regular types. We
will also prove that some of the nice properties from the stable context
hold in  general. In the case of strongly regular types we will relate $\nor$ to the global Rudin-Keisler  order.
\end{abstract}

The concept  of (strong) regularity     for global, invariant
types in an arbitrary first-order theory was  introduced in
Section 3 of \cite{PT}. The definition there was motivated by and
extends that of regular (stationary) and strongly regular types in
stable theories. Intuitively, it can be described as follows: Fix
a global, invariant type $\mathfrak p$. Consider  all the formulas
in $\mathfrak p$ as defining "large" subsets of the monster and
their negations as defining  "small" ones. Then it is natural to
define: $\cl_{\mathfrak p}(X)$ is the union of all small subsets
definable over $X$. It turned out that the regularity of
$\mathfrak p$ means precisely that $\cl_{\mathfrak p}$ is a
closure operation on the locus of $\mathfrak p_{\strok A}$ (for
almost all $A$ over which $\mathfrak p$ is invariant). There are
two kinds of regular types:

\smallskip
- $\mathfrak p$ is symmetric.   Morley sequences are totally indiscernible and
the closure operation  is a pregeometry operation inducing the dimension function.

\smallskip
- $\mathfrak p$ is asymmetric. The  closure operation is induced
by a definable partial ordering which totally orders Morley
sequences.

\smallskip
Stable regular types are symmetric, while asymmetric regular types
may exist only in theories with the strict order property. For example, the type of an infinite element in a theory of dense linear orders without endpoints is strongly regular.
Interesting examples of both kinds of strongly regular types are heirs of
"generic" types of minimal and quasi-minimal groups (and fields);
they were recently studied in \cite{KTW} and \cite{GK}.

Asymmetric
regular types are studied in detail in the forthcoming paper
\cite{MT},  and in this article we will concentrate on non-orthogonality of symmetric, regular
types.   For generically stable regular types we will prove that
non-orthogonality is an equivalence relation.

\begin{thm}\label{Tgenstabnor}
Generic stability is preserved under non-orthogonality of regular
types. Non-orthogonality is an equivalence relation on the set of
all regular, generically stable types.
\end{thm}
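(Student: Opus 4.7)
The plan is to exploit the pregeometry $\cl_{\mathfrak p}$ coming with each symmetric regular type together with the extra rigidity supplied by generic stability (total indiscernibility of Morley sequences and definability of the type). I would first unwind $\mathfrak p\nwor\mathfrak q$ into concrete combinatorial data: choose a base $A$ over which both types are invariant, produce $a\models\mathfrak p_{\strok A}$ and $b\models\mathfrak q_{\strok A}$ with $a\dep_A b$, and use regularity on both sides to sharpen this to $a\in\cl_{\mathfrak p}(Ab)\setminus\cl_{\mathfrak p}(A)$ and symmetrically $b\in\cl_{\mathfrak q}(Aa)\setminus\cl_{\mathfrak q}(A)$.

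For the first clause, let $\mathfrak p$ be generically stable and regular and $\mathfrak q$ regular with $\mathfrak p\nwor\mathfrak q$. Take a Morley sequence $(b_i)_{i<\omega}$ of $\mathfrak q$ over $A$; using the dependence above applied inductively, build a parallel sequence $(a_i)_{i<\omega}$ of realizations of $\mathfrak p$ such that each $a_i$ is $\cl_{\mathfrak p}$-interdependent with $b_i$ over $A$ in a uniform way. Check via the pregeometry that $(a_i)$ is itself a Morley sequence of $\mathfrak p$. Since $\mathfrak p$ is generically stable, $(a_i)$ is totally indiscernible, and pulling this symmetry back through the uniform correspondence will force $(b_i)$ to be totally indiscernible too. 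Because this holds for every Morley sequence of the symmetric regular type $\mathfrak q$, the type $\mathfrak q$ is itself generically stable.

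For the equivalence relation statement, reflexivity is immediate and symmetry of $\nwor$ holds in general for invariant types, so the substantive content is transitivity. Suppose $\mathfrak p\nwor\mathfrak q$ and $\mathfrak q\nwor\mathfrak r$, all three being generically stable and regular. I would pick witnesses $(a,b)$ and $(b',c)$ for the two non-orthogonalities and, using generic stability of $\mathfrak q$ to stretch $b$ and $b'$ along a Morley sequence of $\mathfrak q$ and a coherence step inside the $\cl_{\mathfrak q}$-pregeometry, amalgamate them into a single triple $(a,b,c)$ in which $a\dep_A b$ and $b\dep_A c$ hold simultaneously. Composing inside the three pregeometries $\cl_{\mathfrak p}$, $\cl_{\mathfrak q}$, $\cl_{\mathfrak r}$ should then propagate the chain and yield $a\dep_A c$, witnessing $\mathfrak p\nwor\mathfrak r$.

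The principal obstacle is transitivity. In an arbitrary theory non-forking independence need not satisfy transitivity, so the classical stable argument does not transfer verbatim; what compensates is the combination of total indiscernibility of Morley sequences (from generic stability) and the pregeometry supplied by regularity. Coordinating three pregeometries over a common base, keeping track of which locus each point inhabits, and pushing the dependence along the chain $a$--$b$--$c$ without losing the $\mathfrak p$/$\mathfrak r$ witness, is the delicate technical step on which the whole argument will hinge.
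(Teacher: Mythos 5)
The proposal has two genuine gaps, one in each clause.

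\textbf{Preservation of generic stability.} Your plan is to build from a Morley sequence $(b_i)$ of $\mathfrak q$ a parallel Morley sequence $(a_i)$ of $\mathfrak p$, observe that $(a_i)$ is totally indiscernible because $\mathfrak p$ is generically stable, and then pull this back to conclude $(b_i)$ is totally indiscernible and hence $\mathfrak q$ is generically stable. But total indiscernibility of Morley sequences is equivalent only to the \emph{symmetry} of $\mathfrak q$, not to generic stability. Generic stability is the strictly stronger alternation condition on \emph{arbitrary} formulas with \emph{arbitrary} external parameters: for every $\phi(x,\bar z)$ and $\bar c$, the set $\{i:\models\phi(b_i,\bar c)\}$ is finite or cofinite. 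A Morley sequence can be totally indiscernible while an external $\bar c$ still cuts it into two infinite halves (the unique invariant $1$-type in the random graph is exactly such an example). So the conclusion you draw from total indiscernibility does not follow. The paper instead argues by contradiction: if $\mathfrak q$ is not generically stable, it produces a tuple $\bar c$ and formula $\phi$ alternating along $(b_i)$, and then manufactures a formula $\psi(x,\bar c)$ alternating along the parallel sequence $(a_i)$, contradicting generic stability of $\mathfrak p$. The construction of $\psi$ is not formal bookkeeping; it uses the existential formula produced in Lemma \ref{L16}(ii), which in turn leans on the weight-one property discussed next. This transfer of an \emph{alternating formula with external parameters} is the missing step, and it is the heart of the argument.

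\textbf{Transitivity.} You identify correctly that transitivity is the substantive issue and that naive forking-transitivity fails, but your plan (``composing inside the three pregeometries \ldots should propagate the chain'') leaves the actual mechanism unspecified. Each pregeometry $\cl_{\mathfrak p}$, $\cl_{\mathfrak q}$, $\cl_{\mathfrak r}$ lives on the locus of its own type, so $a\in\cl_{\mathfrak p}(Ab)$ and $b\in\cl_{\mathfrak q}(Ac)$ do not combine directly: there is no common pregeometry to compose in. What the paper establishes, and what your proposal does not supply, is that generically stable regular types have \emph{weight one with respect to $\otimes$-independence} (Proposition \ref{Pwt1}): if $\bar b\bar c\models\mathfrak q\otimes\mathfrak r$ and $a\models\mathfrak p_{\strok A}$ then $a$ is free from $\bar b$ or from $\bar c$ over $A$. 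This in turn rests on the dichotomy of Lemma \ref{L1}, where generic stability forces option (B). With weight one in hand, Lemma \ref{L16}(i) shows precisely that $b\nmodels\mathfrak q_{\strok A\bar c}$ and $a\nmodels\mathfrak p_{\strok Ab}$ imply $a\nmodels\mathfrak p_{\strok A\bar c}$, which is the transitivity you want. Without some analogue of weight one, there is no reason the dependence chain should compose; this is the concrete idea your outline is still missing.
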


Next we will study generically stable, strongly regular types and
prove that non-orthogonality is strongly related to the global
version of Lascar's  Rudin-Keisler order  which was originally
defined in the $\aleph_0$-stable context:

\begin{thm}\label{Tstrregintro}
Suppose that $\mathfrak p$ is generically stable and strongly
regular. Then:

\smallskip (1) \ \  $\mathfrak p$ is RK-minimal in the global RK-order.

\smallskip (2) \ \ If $\mathfrak q$ is invariant then: \ $\mathfrak
p\nor \mathfrak q$ if and only if $\mathfrak p\leq_{RK}\mathfrak
q$.
\end{thm}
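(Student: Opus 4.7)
The plan is to prove (2) first and derive (1) as a short corollary.

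For the direction $\mathfrak p\leq_{RK}\mathfrak q \Rightarrow \mathfrak p\nor\mathfrak q$ of (2), nothing beyond the definitions is needed: if $\mathfrak p=f_*(\mathfrak q)$ for some $A$-definable function $f$, pick $b\models\mathfrak q\strok A$ and set $a=f(b)$. Then $a\models\mathfrak p\strok A$ and $a\in\dcl(Ab)$, so $a\dep_A b$, witnessing $\mathfrak p\nor\mathfrak q$.

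For the converse, I start with a small model $M$ over which both types and a strong-regularity witness $\varphi(x)\in\mathfrak p$ are defined, together with realizations $a\models\mathfrak p\strok M$ and $b\models\mathfrak q\strok M$ satisfying $a\dep_M b$. Generic stability gives symmetry of the independence relation on realizations, so this is equivalent to $\tp(a/Mb)\neq\mathfrak p\strok Mb$. Strong regularity of $\mathfrak p$ then supplies a formula over $Mb$ of the form $\theta(x,b)\wedge\varphi(x)$ that defines a $\mathfrak p$-small set containing $a$. Next I invoke generic stability a second time: any $\mathfrak p$-small definable set intersected with the locus of $\mathfrak p\strok M$ must be finite, since an infinite such set would, by finite satisfiability of $\mathfrak p$ in $M$, contain a realization of $\mathfrak p\strok Mb$, contradicting smallness. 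Hence $a\in\acl(Mb)$.

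The main obstacle is upgrading $\acl$ to $\dcl$ to produce a genuine definable function. Here I would exploit the definability of $\mathfrak p$ (another consequence of generic stability): applying the defining schema of $\mathfrak p$ to the formula witnessing the finite fiber should select, uniformly in $b$, a canonical element of $\{x:\theta(x,b)\wedge\varphi(x)\}$---for instance, the unique element realizing $\mathfrak p\strok M$, if strong regularity together with generic stability forces uniqueness, or else a distinguished element produced by the schema itself. This yields an $M$-definable partial function $g$ with $g(b)\models\mathfrak p\strok M$ for every $b\models\mathfrak q\strok M$, so that $g_*(\mathfrak q)=\mathfrak p$ and therefore $\mathfrak p\leq_{RK}\mathfrak q$.

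Part (1) follows at once. Suppose $\mathfrak q\leq_{RK}\mathfrak p$ via an $A$-definable function $f$ and that $\mathfrak q$ is non-algebraic. Then $\mathfrak q$ is invariant as a pushforward of an invariant type; taking $a\models\mathfrak p\strok A$ and $b=f(a)\models\mathfrak q\strok A$ gives $b\in\dcl(Aa)$ with $b$ non-algebraic, so $a\dep_A b$ and $\mathfrak p\nor\mathfrak q$. Applying (2) yields $\mathfrak p\leq_{RK}\mathfrak q$, whence $\mathfrak p\equiv_{RK}\mathfrak q$, which is RK-minimality.
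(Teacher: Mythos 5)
There is a genuine gap, and it starts at the level of definitions. In this paper $\mathfrak p\leq_{RK}\mathfrak q$ is the semi-isolation relation of Definition \ref{Drkorder}: there exist $\bar a\models\mathfrak p$ and $\bar b\models\mathfrak q$ with $\bar a\in\Sem_{\bar M}(\bar b)$, i.e.\ some formula $\psi(\bar x,\bar b)\in\tp(\bar a/\bar M\bar b)$ implies the complete type $\tp(\bar a/\bar M)$. You instead argue with the pushforward relation ``$\mathfrak p=f_*(\mathfrak q)$ for an $M$-definable $f$'', which would put $\bar a\in\dcl(\bar M\bar b)$ and is strictly stronger than semi-isolation. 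The preliminaries of the paper explicitly warn that in the unstable setting only the semi-isolation formulation of the Lascar RK-order is workable; the prime-model and definable-function formulations are not the intended target here.

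That mismatch drives the two steps in your argument that do not hold. First, the claim that the $\mathfrak p$-small set $\theta(\bar M,b)\wedge\varphi(\bar M)$ meets the locus of $\mathfrak p_{\strok M}$ in a finite set is unjustified: the formula is over $Mb$, so finite satisfiability of $\mathfrak p$ in $M$ does not apply to it, and in any case no contradiction arises from the intersection being infinite. One should expect that intersection to be $\cl_{\mathfrak p,M}(b)\cap\theta(\bar M,b)$, which has no reason to be finite; so $a\in\acl(Mb)$ is not established. Second, the proposed upgrade from $\acl$ to $\dcl$ is only a hope: there is no mechanism by which the defining schema of $\mathfrak p$ picks out a canonical element of a finite set, and strong regularity together with generic stability does not force such uniqueness. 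The paper avoids all of this. The key fact (Proposition \ref{Psem}, built on Lemma \ref{Lsemi}) is that if $a\nmodels\mathfrak p_{\strok A\bar b}$ and yet $\bar b$ did \emph{not} semi-isolate $a$ over $A$, then $\tp(a/A\bar b)$ would be finitely satisfiable in $\theta(\bar M,\bar b)\smallsetminus\mathfrak p_{\strok A}(\bar M)\subseteq\phi_{\mathfrak p}(\bar M)\smallsetminus\mathfrak p_{\strok A}(\bar M)$; reversing a $\mathcal C_{\mathfrak p}$-sequence produces, via strong regularity (WOR), an arbitrarily long Morley sequence of $\mathfrak p$ on which $\theta(x,\bar b)$ holds, and extending it past $\bar b$ makes $\theta$ fail cofinally, contradicting generic stability. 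No $\acl$ or $\dcl$ claim is needed, and none would be available. Your derivation of (1) from (2) is structurally the same as the paper's and would be fine once (2) is proved correctly, but as written it inherits the same definitional confusion (an RK-minimality argument must run through semi-isolation, not through definable sections).
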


The next theorem is probably more surprising than the previous,
because its original proof in the $\aleph_0$-stable case (see e.g
\cite{L2}) relied heavily on the existence of prime models over
arbitrary sets.

\begin{thm}\label{Tlocal}
Suppose that $(\mathfrak{p}(x),\phi_{\mathfrak{p}}(x))$ and
$(\mathfrak{q}(x),\phi_{\mathfrak{q}}(x))$ are $M$-invariant,
strongly regular and generically stable.  Then the following
conditions are all equivalent: \

(1) \ \  $\mathfrak p\perp\mathfrak q$;

\smallskip (2) \ \ $\mathfrak{p}_{\strok M}\wor \mathfrak{q}_{\strok M}$;

\smallskip (3) \ \ For some $C\supseteq M$: \ $\mathfrak{p}_{\strok
C}\wor \mathfrak{q}_{\strok C}$.
\end{thm}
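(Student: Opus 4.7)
The implications $(1)\Rightarrow(2)\Rightarrow(3)$ are immediate from the definitions: orthogonality $\mathfrak{p}\perp\mathfrak{q}$ asserts weak orthogonality over every admissible base and hence specializes to $M$, yielding (2); and (3) is the instance $C=M$ of (2).

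For the content $(3)\Rightarrow(1)$, I would argue by contradiction. Fix $C\supseteq M$ with $\mathfrak{p}_{\strok C}\wor\mathfrak{q}_{\strok C}$, an arbitrary $A\supseteq M$, and realizations $a\models\mathfrak{p}_{\strok A}$, $b\models\mathfrak{q}_{\strok A}$ for which $\mathfrak{p}_{\strok A}(x)\cup\mathfrak{q}_{\strok A}(y)$ fails to determine $\tp(ab/A)$. Equivalently, $\tp(b/Aa)\ne\mathfrak{q}_{\strok Aa}$, and strong regularity of $(\mathfrak{q},\phi_{\mathfrak{q}})$ records this as $b\in\cl_{\mathfrak{q}}(Aa)$, witnessed by a formula $\psi(y,a,\bar e)$ with $\bar e\in A$ such that $\models\psi(b,a,\bar e)$ and $\neg\psi(y,a,\bar e)\in\mathfrak{q}$.

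Next I would transport the witness into a configuration over $C$. Using $M$-invariance of $\mathfrak{p}$ and $\mathfrak{q}$ together with the total $M$-indiscernibility of their Morley sequences (from generic stability), I plan to produce an $M$-automorphism $\sigma$ of the monster with $\sigma(a)\models\mathfrak{p}_{\strok C}$, $\sigma(b)\models\mathfrak{q}_{\strok C}$, and $\sigma(\bar e)\in C$. The last of these is the delicate point; I would accomplish it by first enlarging $C$ along a Morley sequence in an $M$-invariant extension of $\tp(\bar e/M)$, checking via strong regularity and generic stability that the enlargement preserves $\wor$ (so that WLOG $C$ already contains an $M$-conjugate of $\bar e$). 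The formula $\psi$ is then carried by $\sigma$ to $\models\psi(\sigma(b),\sigma(a),\sigma(\bar e))$ with $\neg\psi(y,\sigma(a),\sigma(\bar e))\in\mathfrak{q}$, yielding $\sigma(b)\in\cl_{\mathfrak{q}}(C\sigma(a))$, i.e.\ $\sigma(b)\nmodels\mathfrak{q}_{\strok C\sigma(a)}$, which contradicts $\mathfrak{p}_{\strok C}\wor\mathfrak{q}_{\strok C}$.

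The principal obstacle is exactly this transport-and-absorption step: maneuvering $\bar e$ into $C$ by an $M$-automorphism without damaging the weak-orthogonality hypothesis or the small-formula witness. This is the point where strong regularity and generic stability must jointly substitute for the prime-model construction of the classical $\aleph_0$-stable proof cited after the statement: strong regularity supplies a $\phi_{\mathfrak{q}}$-controlled witness that is stable under $M$-automorphisms, and generic stability supplies, via total indiscernibility of Morley sequences, the conjugacy needed to absorb $\bar e$ into $C$ while keeping the other parameters generic.
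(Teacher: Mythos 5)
The proposal has two genuine problems.

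First, the claim that $(1)\Rightarrow(2)$ is ``immediate from the definitions'' is wrong. In this paper $\mathfrak p\perp\mathfrak q$ means only that $\mathfrak p(x)\cup\mathfrak q(y)$ determines a complete type over the monster; it does not assert weak orthogonality over every base. The paper explicitly warns that the situation $\mathfrak p\perp\mathfrak q$ together with $\mathfrak p_{\strok A}\nwor\mathfrak q_{\strok A}$ ``may occur in an unstable theory.'' Pushing global weak orthogonality down to $M$ is exactly where generic stability enters, via Fact~\ref{For} or Proposition~\ref{Pwor=or} (both of which use definability of the types); it is not definitional.

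Second, the $(3)\Rightarrow(1)$ argument by $M$-automorphism is not actually carried out, and the step you single out as ``the principal obstacle'' is a real gap, not a technicality. An $M$-automorphism $\sigma$ chosen to move $\bar e$ into $C$ (or into an enlargement of $C$) gives no control over $\tp(\sigma(a)/C)$ and $\tp(\sigma(b)/C)$; there is no reason that $\sigma(a)$ should realize $\mathfrak p_{\strok C}$ and $\sigma(b)$ realize $\mathfrak q_{\strok C}$ simultaneously, and ``total indiscernibility of Morley sequences'' does not produce such a conjugacy since $a,b,\bar e$ are not assumed to sit in Morley configurations over $M$. This is precisely the difficulty the paper avoids by working in a different way: fixing a witnessing formula $\varphi(x,y,\bar c)\notin\mathfrak p$ over some $M\bar c$, it uses the $M$-definability of $\mathfrak p$ (the scheme $d_{\mathfrak p}$) to replace the parameters $\bar c$ by parameters $\bar c'\in M$ — no automorphism, no transport of realizations, no enlargement of $C$. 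It then invokes Lemma~\ref{L35} (itself a consequence of the RK-analysis of strongly regular generically stable types in Theorem~\ref{Trk}) to conclude that the displaced formula $\psi(a,y)$ already isolates $\mathfrak q_{\strok M}$, which gives $\mathfrak p_{\strok M}\nwor\mathfrak q_{\strok M}$. Your sketch never uses definability or the RK/semi-isolation machinery, and these are the substitutes for the prime-model argument you allude to; without them the transport step cannot be completed.
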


The paper is organized as follows: Section \ref{Spreliminaries}
contains preliminaries. In Section \ref{Sregularity} we will
(slightly) re-define regularity for global invariant types. The
re-definition is needed   due to the fact that in Remark 3.1 in \cite{PT} it was
noted (without proof) that the regularity condition in the
definition does not depend on the particular choice of the parameter set
over which the type is invariant. This is correct if the type is
strongly regular  but we do not know if that holds for an
arbitrary regular type. Fortunately, the lapsus did not affect
proofs of main results, only minor rephrasing of some of the
statements is needed: replacement of "$\mathfrak p$ is regular and
$A$-invariant" by "$\mathfrak p$ is regular over $A$"; the regularity of $\mathfrak p$ over a set is
introduced in Definition \ref{DefPTreg}  below. In Section \ref{Sorth}  we
study non-orthogonality of generically stable regular types and
prove Theorem \ref{Tgenstabnor}. The main technical fact used in the proof, stating that  generically stable regular types have weight one, is proved in Proposition \ref{Pwt1}.   Section \ref{Sstrong}  deals with strongly
regular types and there we prove Theorems \ref{Tstrregintro} and
\ref{Tlocal}.     As an application of the results from Section \ref{Sorth}, in Section \ref{Somit} we prove  that one can vary dimensions of generically stable regular types in countable
models as in the stable case:

\begin{thm}\label{Tomit}
Suppose that $T$ and $A$ are countable and $\{\mathfrak
 p_i \,|\,i\in I\}$ is a countable family of pairwise
orthogonal, regular over $A$, generically stable types. Also assume that each $\mathfrak p_{i\,\strok A}$ is non-isolated. Then for any function $f:I\longrightarrow \omega$ there exists a countable
$M_f\supseteq A$ such that $\dim_{\mathfrak p_i}(M_f/A)=f(i)$ for all $i\in I$.
\end{thm}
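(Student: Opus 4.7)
The plan is to follow the classical stable-theory omitting-types construction. First I would fix a countable set $B\supseteq A$ whose $\mathfrak{p}_i$-dimension over $A$ is already $f(i)$ for each $i$, and then close $B$ off to a countable model $M_f$ by the omitting-types theorem, arranging that no new realizations of any $\mathfrak{p}_{i\strok B}$ are added. Concretely, for each $i\in I$ I would pick a Morley sequence $I_i$ of $\mathfrak{p}_i$ over $A$ of length $f(i)$; pairwise orthogonality of the $\mathfrak{p}_i$'s (combined with generic stability) lets these sequences be chosen mutually non-forking independent over $A$, so $B:=A\cup\bigcup_i I_i$ is a countable set with $\dim_{\mathfrak{p}_i}(B/A)=f(i)$ for every $i$.

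The main technical claim is that each restriction $\mathfrak{p}_{i\strok B}$ is non-isolated. Because every $\mathfrak{p}_j$ with $j\neq i$ is orthogonal to $\mathfrak{p}_i$ and $\mathfrak{p}_i$ is generically stable of weight one (Proposition \ref{Pwt1}), the type $\mathfrak{p}_{i\strok B}$ is the unique non-forking extension of $\mathfrak{p}_{i\strok A}$. The argument should then show that non-isolation is preserved under this non-forking extension: if some $\phi(x,b)$ with $b\in B$ isolated $\mathfrak{p}_{i\strok B}$, one extracts a parameter-free witness using definability of the non-forking extension and symmetry of Morley sequences (both guaranteed by generic stability) together with the fact that $B$ is built from data independent of any further realization of $\mathfrak{p}_{i\strok A}$; this contradicts the hypothesis that $\mathfrak{p}_{i\strok A}$ is non-isolated.

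Once the non-isolation claim is established, the standard Henkin-style omitting-types construction, applied to the countable theory, countable parameter set $B$, and the countable family $\{\mathfrak{p}_{i\strok B}:i\in I\}$ of non-isolated complete types, produces a countable model $M_f\supseteq B$ omitting them all. To verify $\dim_{\mathfrak{p}_i}(M_f/A)=f(i)$: the sequence $I_i\subseteq M_f$ gives $\dim_{\mathfrak{p}_i}(M_f/A)\geq f(i)$. Conversely, if some $c\in M_f$ realized $\mathfrak{p}_{i\strok A}$ and lay outside $\cl_{\mathfrak{p}_i}(A\cup I_i)$, then by the pregeometry coming from regularity and orthogonality to the $\mathfrak{p}_j$ for $j\neq i$, the element $c$ would be non-forking independent from $B$ over $A$ and so realize $\mathfrak{p}_{i\strok B}$, contradicting the omitting.

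The main obstacle is the preservation-of-non-isolation step. While this fact is classical for stationary types in stable theories, in the generically stable regular setting it must be re-proved by hand, using the definability and symmetry provided by generic stability, the closure/pregeometry machinery from regularity, and the ``block-diagonal'' independence of $B$ over $A$ furnished by pairwise orthogonality and the weight-one result from Section \ref{Sorth}. The remainder of the argument is a routine combination of omitting types with dimension bookkeeping in the pregeometry $\cl_{\mathfrak{p}_i}$.
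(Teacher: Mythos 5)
Your overall plan matches the paper's proof: pick finite Morley sequences $J_i$ of length $f(i)$, apply the omitting-types theorem to the restrictions to $AJ$ (where $J=\bigcup_i J_i$), and check the resulting dimensions. The paper also establishes exactly the two auxiliary facts you identify as needed: preservation of non-isolation under extension of the base (Lemma \ref{Lnonisol}) and the propagation of weak orthogonality that gives $\mathfrak p_{i\strok AJ_i}\vdash \mathfrak p_{i\strok AJ}$ (Corollary \ref{Cworomega}).

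That said, two parts of your sketch need to be repaired. First, the ``mutual non-forking independence'' of the sequences $I_i$ is neither well-defined here nor needed. Forking independence is not available in this setting, and the phrase ``$\mathfrak p_{i\strok B}$ is the unique non-forking extension of $\mathfrak p_{i\strok A}$'' does not parse: $\mathfrak p_{i\strok B}$ is just the restriction of the fixed global $A$-invariant type to $B$. The relevant observation is Corollary \ref{Cworomega}: pairwise orthogonality plus generic stability yields $\mathfrak p_i^{\omega}{}_{\strok C}\wor\mathfrak q^{\omega}_{\strok C}$ over all $C\supseteq A$, and iterating this gives $\mathfrak p_{i\strok AJ_i}\vdash\mathfrak p_{i\strok AJ}$ with $J$ the naive union — no independence precondition on how $J$ was chosen is required.

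Second, and more seriously, the non-isolation claim is the heart of the theorem and your sketch does not contain the idea that proves it. You suggest extracting ``a parameter-free witness using definability of the non-forking extension and symmetry of Morley sequences together with the block-diagonal independence of $B$.'' But the paper's Lemma \ref{Lnonisol} shows $\mathfrak p_{\strok B}$ is non-isolated for \emph{every} $B\supseteq A$, with no structural hypothesis on $B$ at all; the special shape of your $B$ cannot be the ingredient that makes it work. The actual mechanism is a maximality argument: if $\varphi(x,\bar b)$ isolated $\mathfrak p_{\strok A\bar b}$, one takes a maximal Morley sequence $a_1,\dots,a_n$ in $\mathfrak p$ over $A$ with $\models\bigwedge_i\neg\varphi(a_i,\bar b)$ (finite by generic stability, and regularity gives $\mathfrak p_{\strok A\bar b}\vdash\mathfrak p_{\strok A\bar b a_1\dots a_n}$), then uses maximality to produce a formula $\psi(x,z_1,\dots,z_n)$ such that $d_{\mathfrak p^n}z_1\dots z_n\,\psi(x,\bar z)$ isolates $\mathfrak p_{\strok A}$, contradicting the hypothesis. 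This genuinely uses the $\mathfrak p^n$-definition quantifier $d_{\mathfrak p^n}$ to convert a parameter $\bar b$ into an $A$-definable formula; nothing in your sketch corresponds to this step. Until this lemma is supplied, the proof has a real gap.
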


\section{Preliminaries}\label{Spreliminaries}

The notation is mainly standard, the only exception is the
convention on the product of invariant types. We fix a complete
first-order theory $T$ and operate in its monster model $\bar M$.
By $a,b,c..., \bar a,\bar b,\bar c,...$ we will denote elements
and tuples of elements,    by $A,B,C,...$     small subsets of the
monster, while $M,M',...$ will denote small elementary submodels.
Global types  will be denoted by $\mathfrak p,\mathfrak
q,\mathfrak r,...$. A global type $\mathfrak p$ is {\em
$A$-invariant} if whenever $ \bar b_1\equiv \bar b_2 \,(A)$ then
$(\phi(\bar x,\bar b_1)\Leftrightarrow \phi(\bar x,\bar b_2))\in
\mathfrak p(\bar x)$ for all $\phi(\bar x;\bar y)$ with parameters
from $A$. $\mathfrak p$ is {\em invariant} if it is $A$-invariant
over some small $A$. $\mathfrak p_{\strok A}$ will denote the
restriction of $\mathfrak p$ to $A$ and $(a_i\,|\,i<\alpha)$ is a
{\em Morley sequence in $\mathfrak p$ over $A$} if $a_i\models
\mathfrak p_{\strok A a_{<i}}$ for all $i<\alpha$.

Assume for a while that  $\mathfrak p$ is $A$-invariant.  Then
Morley sequences in $\mathfrak p$ over $A$ are indiscernible. We
will occasionally go out of $\bar M$ (into a larger monster) in
order to get realizations of global types; these will be also
denoted by $\bar a, \bar b,...$, in which case $\mathfrak p_{\strok
\bar M\bar a}$ will be well-defined due to the invariance of
$\mathfrak p$. Thus global Morley sequences are also well-defined,
as well as the powers   $\mathfrak p^{\alpha}$  (types of Morley
sequences of length $\alpha$) are. Let $\bar a_1,\bar a_2 \models
\mathfrak p^2$. If $ \tp(\bar a_1,\bar a_2/\bar M)=\tp(\bar
a_2,\bar a_1/\bar M)$ then we will say that $\mathfrak p$ is {\em
symmetric}; otherwise, it is {\em asymmetric}. If $\mathfrak p$ is
symmetric,  $(\bar a_i\,|\,i<\alpha)\models \mathfrak p^{\alpha}$ and
$\pi$ is a permutation of $\alpha$ then
$(\bar a_{\pi(i)}\,|\,i<\alpha)\models  \mathfrak p^{\alpha}$.

Products of invariant types were introduced in \cite{HP}.  Here we
will reverse the  order in the definition: if $\mathfrak p$ and
$\mathfrak q$ are invariant then their  {\em product}  $\mathfrak
p(\bar x)\otimes\mathfrak q(\bar y)$ is defined as follows: if
$\bar a\models \mathfrak p$ and $\bar b\models \mathfrak q_{\strok
\bar M\bar a}$ then $\mathfrak p(\bar x)\otimes\mathfrak q(\bar
y)=\tp_{\bar x,\bar y}(\bar a,\bar b/\bar M)$; thus our $\mathfrak p(\bar x)\otimes\mathfrak q(\bar
y)$ is the original $\mathfrak q(\bar y)\otimes\mathfrak p(\bar
x)$. This change was suggested by Ludomir Newelski due to the fact  that it is
natural to have the equivalence: \ $(\bar a_1,\bar a_2)$ is a Morley sequence
in $\mathfrak p$ over $A$ \ if and only if \ $\bar a_1,\bar a_2\models
\mathfrak p\otimes\mathfrak p$\,. The product is associative, but not commutative. We say that $\mathfrak p$ and $\mathfrak q$ commute if $\mathfrak p(\bar x)\otimes\mathfrak q(\bar
y)= \mathfrak q(\bar y)\otimes\mathfrak p(\bar
x)$.

\smallskip Complete types $p,q$ over the same domain are {\em weakly
orthogonal}, or $p\wor q$, if $p(\bar x)\cup q(\bar y)$ determines
a complete type. Global types $\mathfrak p$ and $\mathfrak q$ are {\em
orthogonal}, or $\mathfrak p\perp \mathfrak q$,  if they are
weakly orthogonal. It is possible that both $\mathfrak p\nor
\mathfrak q$ and $\mathfrak p_{\strok A}\wor \mathfrak q_{\strok A}$
hold for $A$-invariant types (even regular in a superstable
theory). The opposite situation,   $\mathfrak p\perp \mathfrak q$
and $\mathfrak p_{\strok A}\nwor \mathfrak q_{\strok A}$    may occur in
an unstable theory, but not in a stable one.  In a  stable theory
$\mathfrak p_{\strok A}\nwor\mathfrak q_{\strok A}$ implies $\mathfrak
p_{\strok C}\nwor\mathfrak q_{\strok C}$ for all $C\supseteq A$ and, in
particular, $\mathfrak p\nor\mathfrak q$. We will see in
Proposition \ref{Pwor=or} that this holds for any regular, generically stable type. For definable types over a model we have:

\begin{fact}\label{For}
Suppose that  $\mathfrak p$ and $\mathfrak q$ are both
$M$-invariant and   definable, and    $\mathfrak p_{\strok M}\nwor
\mathfrak q_{\strok M}$. Then  $\mathfrak p_{\strok C}\nwor \mathfrak
q_{\strok C}$ for all $C\supseteq M$; in particular, $\mathfrak p\nor
\mathfrak q$.
\end{fact}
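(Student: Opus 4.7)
The plan is to show the contrapositive on formulas: any $M$-formula $\phi(x,y)$ left undecided by $\mathfrak p_{\strok M}(x)\cup\mathfrak q_{\strok M}(y)$ remains undecided by $\mathfrak p_{\strok C}(x)\cup\mathfrak q_{\strok C}(y)$. Let $d_p\phi(y)$ and $d_q\phi(x)$ be the $M$-formulas furnished by the definition schemas of $\mathfrak p$ and $\mathfrak q$, so that $\phi(x,b)\in\mathfrak p$ iff ${\models}\,d_p\phi(b)$, and $\phi(a,y)\in\mathfrak q$ iff ${\models}\,d_q\phi(a)$.

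The first step is to compute the $\phi$-value on the two canonical amalgamations of $\mathfrak p_{\strok C}\cup\mathfrak q_{\strok C}$. Taking $\alpha\models\mathfrak p_{\strok C}$ and $\beta\models\mathfrak q_{\strok C\alpha}$, one finds $\phi(\alpha,\beta)\Leftrightarrow{\models}\,d_q\phi(\alpha)\Leftrightarrow d_q\phi(x)\in\mathfrak p_{\strok M}$; symmetrically, $\beta\models\mathfrak q_{\strok C}$ with $\alpha\models\mathfrak p_{\strok C\beta}$ yields $\phi(\alpha,\beta)\Leftrightarrow d_p\phi(y)\in\mathfrak q_{\strok M}$. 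Both conditions are decided by the complete $M$-types. If they disagree (say $d_q\phi\in\mathfrak p_{\strok M}$ but $\neg d_p\phi\in\mathfrak q_{\strok M}$, or the reverse), the two resulting realizations of $\mathfrak p_{\strok C}\cup\mathfrak q_{\strok C}$ directly witness $\mathfrak p_{\strok C}\nwor\mathfrak q_{\strok C}$, and we are done.

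If the two conditions agree, both canonical amalgamations force the same $\phi$-value, so the assumption $\mathfrak p_{\strok M}\nwor\mathfrak q_{\strok M}$ must be witnessed by a non-canonical pair $(a_1,b_1)\models\mathfrak p_{\strok M}\cup\mathfrak q_{\strok M}$ of the opposite $\phi$-value; inspecting the schemas shows $a_1\not\models\mathfrak p_{\strok Mb_1}$ and $b_1\not\models\mathfrak q_{\strok Ma_1}$. I would then argue that the partial type $\tp(a_1,b_1/M)\cup\mathfrak p_{\strok C}(x)\cup\mathfrak q_{\strok C}(y)$ is consistent, so that any realization in $\bar M$ supplies the required completion with the opposite $\phi$-value. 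Concretely, given a finite obstruction $\theta(x,y)\wedge\psi(x,\bar c)\wedge\chi(y,\bar c')$ with $\theta\in\tp(a_1,b_1/M)$, transport $(a_1,b_1)$ by an $M$-automorphism to a pair $(a'',b')$ with $b'\models\mathfrak q_{\strok C}\wedge\chi(y,\bar c')$ (available since $\mathfrak q_{\strok C}$ extends $\mathfrak q_{\strok M}=\tp(b_1/M)$), then further adjust $a''$ inside its type over $Mb'$ to realize $\psi(x,\bar c)$ using the $\mathfrak p$-schema.

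The principal obstacle is this last adjustment in the non-canonical case: the $\mathfrak p$-schema directly produces realizations of the canonical extension $\mathfrak p_{\strok Mb'\bar c}$, but we instead need to show the non-canonical $\tp(a''/Mb')\cup\{\psi(x,\bar c)\}$ remains consistent; this has to be handled by a saturation/compactness argument exploiting that the parameters $\bar c\in C$ interact with $b'$ only through $M$-formulas, which both types already agree upon. Once this consistency is secured, ``in particular, $\mathfrak p\nor\mathfrak q$'' follows as the special case $C=\bar M$.
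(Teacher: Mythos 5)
The paper states this as a ``Fact'' and offers no proof of its own, so the task is simply to audit your argument. Your Case~1 (the two canonical amalgamations over $C$ give different $\phi$-values) is correct. But in Case~2 you yourself flag the unresolved step, and the gap is real: you fix $b'\models\mathfrak q_{\strok C}$, transport $(a_1,b_1)$ to $(a'',b')$ by an $M$-automorphism, and then need $\tp(a''/Mb')\cup\mathfrak p_{\strok C}(x)$ to be consistent. Nothing in the $\mathfrak p$-schema speaks to the non-canonical type $\tp(a''/Mb')$; the schema only tells you about $\mathfrak p_{\strok Mb'\bar c}$. The hand-wave that ``$\bar c$ interacts with $b'$ only through $M$-formulas'' is not an argument --- after you have fixed $b'\models\mathfrak q_{\strok C}$, the type $\tp(\bar c/Mb')$ is fully determined for $\bar c\in C$ and may well conflict with the constraints that $\tp(a''/Mb')$ places on $x$ relative to $\bar c$. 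So the proposal does not close.

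The clean repair is to move $C$ rather than the witness pair, and then no case split is needed at all. Start from any $(a_1,b_1)\models\mathfrak p_{\strok M}(x)\cup\mathfrak q_{\strok M}(y)$ with $\models\phi(a_1,b_1)$. Using Fact~\ref{Fext}, choose $\bar c'$ with $\bar c'\equiv C\,(M)$ such that $\tp(\bar c'/Ma_1b_1)$ is finitely satisfiable in $M$. Then $a_1\models\mathfrak p_{\strok M\bar c'}$: if not, some $\psi(x,\bar z)$ over $M$ satisfies $\models\psi(a_1,\bar c')\wedge\neg d_{\mathfrak p}\psi(\bar c')$, so by finite satisfiability there is $\bar m\in M$ with $\models\psi(a_1,\bar m)\wedge\neg d_{\mathfrak p}\psi(\bar m)$, contradicting $a_1\models\mathfrak p_{\strok M}$ and the $M$-definability of $\mathfrak p$. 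The same coheir gives $b_1\models\mathfrak q_{\strok M\bar c'}$. An $M$-automorphism sending $\bar c'$ to $C$ now produces a realization of $\mathfrak p_{\strok C}(x)\cup\mathfrak q_{\strok C}(y)\cup\{\phi(x,y)\}$; doing the same with a pair satisfying $\neg\phi$ finishes the proof. This is exactly the missing ingredient in your Case~2, and it subsumes Case~1 as well.
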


\smallskip
A non-algebraic global type $\mathfrak{p}(\bar{x})$ is {\em generically
stable} if, for some small $A$,  it is $A$-invariant
and:
\begin{center}
if $\alpha$ is infinite and  $(\bar{a}_i\,:\,i<\alpha)$   is a
Morley sequence in $\mathfrak{p}$ over $A$ then for any formula
$\phi(\bar{x})$ (with parameters from $\bar{M}$) \ $\{i\,:\
\models\phi(\bar{a}_i)\}$ is either finite or
co-finite.\end{center} 
Using compactness it is straightforward to
check that $\mathfrak p$ is generically stable if the condition
holds for $\alpha=\omega+\omega$. Also, if $\mathfrak{p}$ is
generically stable then as a witness-set $A$ in the definition we
any small $A$ over which $\mathfrak{p}$ is  invariant can be taken.
Generically stable types are definable and symmetric. They commute
with all invariant types. A power of a generically stable type may
not be generically stable;   an example the reader can find in
\cite{ACP}. However, this cannot happen if $\mathfrak p$ is in
addition regular.

\smallskip
Let $\mathcal C$ be any subset of the monster. A  partial type {\em
$\pi(x)$ is finitely satisfiable in $\mathcal C$} if any finite
subtype has infinitely many realization in $\mathcal C$, in which case  we  also
say that $\pi(x)$ is a {\em $\mathcal C$-type}. By a {\em
$\mathcal C$-sequence over $A$} we will mean a sequence
$(a_i\,|\,i\leq n)$ such that $\tp(a_i/A\bar a_{<i})$ is a $\mathcal
C$-type for each $i\leq n$.  Elements of a $\mathcal C$-sequence
can realize distinct types because there may exist many distinct
$\mathcal C$-types, so a $\mathcal C$ -sequence  may not be
indiscernible. The following well known fact  guaranties existence
of $\mathcal C$-extensions.

\begin{fact} \label{Fext}
Suppose that  a   partial type $\pi(x)$ is defined over $A$ and
finitely satisfiable in $\mathcal C$. Then for any $B\supseteq A$
there exists a $\mathcal C$-type  in $S(B)$ extending $\pi(x)$.
\end{fact}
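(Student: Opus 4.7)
The plan is to prove this by a standard Zorn's lemma argument applied to the collection of partial types over $B$ that extend $\pi(x)$ and remain finitely satisfiable in $\mathcal{C}$. Let $\mathcal{F}$ be the family of all partial types $\pi'(x)$ with parameters from $B$ such that $\pi(x)\subseteq \pi'(x)$ and every finite subtype of $\pi'(x)$ has infinitely many realizations in $\mathcal{C}$, ordered by inclusion.

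First I would verify the hypotheses of Zorn's lemma. The family $\mathcal{F}$ is non-empty since $\pi(x)$ itself belongs to it by hypothesis. Closure under unions of chains is immediate from the finitary character of the condition: any finite subtype of a union $\bigcup_{i} \pi_i'(x)$ already lies in some $\pi_i'(x)$, hence has infinitely many realizations in $\mathcal{C}$. Let $q(x)\in \mathcal{F}$ be a maximal element provided by Zorn's lemma.

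Next I would show that $q(x)$ is complete over $B$, which gives the desired $\mathcal{C}$-type in $S(B)$. Suppose for contradiction that some formula $\phi(x)$ over $B$ satisfies $\phi\notin q$ and $\neg\phi\notin q$. By maximality, neither $q\cup\{\phi\}$ nor $q\cup\{\neg\phi\}$ lies in $\mathcal{F}$, so there exist finite subtypes $q_1(x),q_2(x)\subseteq q(x)$ such that $q_1(x)\cup\{\phi(x)\}$ and $q_2(x)\cup\{\neg\phi(x)\}$ each have only finitely many realizations in $\mathcal{C}$. Letting $q_0(x)=q_1(x)\cup q_2(x)$, which is a finite subtype of $q(x)$, we have infinitely many realizations of $q_0(x)$ in $\mathcal{C}$, and each such realization satisfies either $\phi(x)$ or $\neg\phi(x)$; hence one of $q_0(x)\cup\{\phi(x)\}$ or $q_0(x)\cup\{\neg\phi(x)\}$ has infinitely many realizations in $\mathcal{C}$, contradicting the choice of $q_1$ or $q_2$.

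This is a routine argument and I do not anticipate a real obstacle; the only subtle point is tracking the author's convention that ``finitely satisfiable'' means every finite subtype has \emph{infinitely many} realizations in $\mathcal{C}$ rather than merely one. The pigeonhole step in the completeness argument must therefore produce infinitely many realizations on one side of the $\phi/\neg\phi$ dichotomy, which is guaranteed since an infinite set partitioned into two pieces has at least one infinite piece.
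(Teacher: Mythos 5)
Your argument is correct: the paper states Fact \ref{Fext} as a well-known fact and gives no proof, and your Zorn's lemma construction of a maximal member of the family of extensions finitely satisfiable in $\mathcal C$ is the standard argument (equivalently phrased via an ultrafilter on the Boolean algebra of $B$-definable subsets of $\mathcal C$). You also correctly handle the paper's convention that finite satisfiability in $\mathcal C$ demands \emph{infinitely many} realizations of each finite subtype, via the pigeonhole split along $\phi/\neg\phi$; together with consistency of the maximal type (each finite subtype is realized), this indeed yields a $\mathcal C$-type in $S(B)$ extending $\pi(x)$.
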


Non-isolation of  $p\in S_n(A)$ can be expressed in terms of
satisfiability: Fix $\phi(x)\in p$ and let $\mathcal C=\phi(\bar
M)\smallsetminus p(\bar M)$. Then $p$ is non-isolated if and only
if it is a $\mathcal C$-type. Thus, isolation of a type is a strong negation of its finite satisfiability. The next fact follows from Fact \ref{Fext}.

\begin{fact} \label{Ffs}
Suppose that $p\in S_1(A)$ is non-isolated, $\phi(x)\in p$,  and
$A\subseteq B$. Then $p$ has an extension in $S_1(B)$ which is
finitely satisfiable in $\mathcal C=\{c\in\bar M\,|\,
\phi(x)\in\tp(c/A)\neq p\}$.
\end{fact}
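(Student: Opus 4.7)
The plan is essentially to unpack the observation immediately preceding Fact~\ref{Ffs} and then invoke Fact~\ref{Fext}. The observation says that for a non-isolated $p\in S_n(A)$ and any $\phi(x)\in p$, the type $p$ itself is a $\mathcal C$-type, where $\mathcal C=\phi(\bar M)\smallsetminus p(\bar M)$. So the first step is to identify this set $\phi(\bar M)\smallsetminus p(\bar M)$ with the set $\mathcal C=\{c\in\bar M \mid \phi(x)\in\tp(c/A)\neq p\}$ appearing in the fact: an element $c$ lies in $\phi(\bar M)\smallsetminus p(\bar M)$ precisely when $c\models\phi(x)$ and $c$ does not realize $p$, which, since $p$ is a complete type over $A$, is equivalent to $\tp(c/A)\neq p$ (together with $\phi(x)\in\tp(c/A)$). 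Hence the two descriptions of $\mathcal C$ coincide.

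Given this identification, the hypothesis that $p$ is non-isolated tells us (via the observation) that $p(x)$, viewed as a partial type over $A$, is finitely satisfiable in $\mathcal C$. Now I apply Fact~\ref{Fext} with $\pi(x)=p(x)$ and the given $B\supseteq A$: it produces a complete type in $S(B)$ which extends $p$ and is itself a $\mathcal C$-type, i.e.\ finitely satisfiable in $\mathcal C$. This is exactly the extension required by the conclusion.

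There is no genuine obstacle here; the only small point one should be careful about is the first step, namely verifying that the two ways of writing $\mathcal C$ really describe the same set (in particular, that ``$\tp(c/A)\neq p$'' combined with $\phi(x)\in\tp(c/A)$ is the same as ``$c\in\phi(\bar M)$ and $c\notin p(\bar M)$''). Once that bookkeeping is done, the fact is an immediate consequence of Fact~\ref{Fext} applied to the observation characterizing non-isolation by finite satisfiability.
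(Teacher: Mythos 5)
Your proposal is correct and reproduces exactly the paper's (implicit) argument: it observes that non-isolation of $p$ means $p$ is a $\mathcal C$-type for $\mathcal C=\phi(\bar M)\smallsetminus p(\bar M)$, matches this with the $\mathcal C$ in the statement, and then applies Fact~\ref{Fext} to $\pi=p$ and the given $B$. There is nothing to add.
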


A weak negation of satisfiability is the semi-isolation:  $\bar
b$ is {\em semi-isolated by} $\bar a$ over $A$ (or $\bar a$ {\em
semi-isolates $\bar b$} over $A$),    denoted also by $\bar b\in
\Sem_A(\bar a)$,\, iff \,there is a formula $\phi(\bar y,\bar
x)\in \tp(\bar b,\bar a/A)$ such that $\phi(\bar a,\bar x)\vdash
\tp(\bar b/A)$; $\phi(\bar x,\bar y)$ is said to witness the
semi-isolation. Semi-isolation is   transitive: if $\bar
b\in\Sem_A(\bar a)$ is witnessed by $\phi(\bar y,\bar a)$ and
$\bar c\in\Sem_A(\bar b)$ is witnessed by $\psi(\bar z,\bar b)$,
then $\bar c\in\Sem_A(\bar a)$ is witnessed by $\exists \bar
y(\phi(\bar y,\bar a)\wedge\psi(\bar z,\bar y))$.

\begin{fact} \label{Fsemifs}
Suppose that $\tp(  a/A)=p$ is non-algebraic and  $\phi(
x)\in p$. Let $\mathcal C=\{  c\in\bar M\,|\, \phi(
x)\in\tp(  c/A)\neq p\}$ and assume  $\mathcal C\neq
\emptyset$. Then:

\smallskip (1) $\psi(  x,\bar b)\in\tp(  a/A\bar b)$
witnesses $  a\in\Sem_A( \bar b)$ if and only if it is not
satisfied in $\mathcal C$.

\smallskip (2) $  a\notin\Sem_A(\bar b)$ \   if and only if  \
$\tp(  a/A\bar b)$ is a  $\mathcal C$-type.
\end{fact}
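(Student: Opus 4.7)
The plan is to prove (1) directly from the definitions and then deduce (2) from (1) plus a short refinement argument to upgrade a single realization in $\mathcal C$ to infinitely many.

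For (1), the forward direction is immediate: if $\psi(x,\bar b)$ witnesses $a\in\Sem_A(\bar b)$, then $\psi(x,\bar b)\vdash p$, so every realization of $\psi(x,\bar b)$ realizes $p$ and hence lies outside $\mathcal C$. For the converse I would first replace $\psi$ by $\psi\wedge\phi$, which still belongs to $\tp(a/A\bar b)$ since $\phi\in p$ and $a\models p$; this ensures every realization of the strengthened formula satisfies $\phi$. Any such realization that fails to realize $p$ then lies in $\mathcal C$ by definition, so the hypothesis that $\psi$ has no realization in $\mathcal C$ forces every realization of $\psi(x,\bar b)$ into $p(\bar M)$, i.e.\ $\psi(x,\bar b)\vdash p$.

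For (2), the direction ``$\Leftarrow$'' is a short contradiction: if $\psi\in\tp(a/A\bar b)$ witnesses semi-isolation, then (1) says $\psi$ has no realization in $\mathcal C$, contradicting that $\tp(a/A\bar b)$ is a $\mathcal C$-type. For ``$\Rightarrow$'', fix a finite $\{\psi_1,\ldots,\psi_k\}\subseteq \tp(a/A\bar b)$ and set $\chi=\bigwedge_i\psi_i\in\tp(a/A\bar b)$. Since $a\notin\Sem_A(\bar b)$ the formula $\chi$ cannot witness, so by (1) it has at least one realization $c\in\mathcal C$. To upgrade to infinitely many I would argue by contradiction: suppose only $c_1,\ldots,c_m\in\mathcal C$ realize $\chi(x,\bar b)$. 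For each $j$ we have $\tp(c_j/A\bar b)\ne\tp(a/A\bar b)$, so there is $\theta_j(x,\bar b)\in\tp(a/A\bar b)$ with $\neg\theta_j(c_j,\bar b)$; then $\chi\wedge\bigwedge_j\theta_j(x,\bar b)$ lies in $\tp(a/A\bar b)$ but has no realization in $\mathcal C$, so by (1) it witnesses semi-isolation, contradicting $a\notin\Sem_A(\bar b)$.

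The main subtlety is the backward direction of (1): without first strengthening $\psi$ to $\psi\wedge\phi$ one cannot pass from ``no realization in $\mathcal C$'' to $\psi\vdash p$, because a formula might have realizations failing $\phi$ that neither lie in $\mathcal C$ nor realize $p$. Once this preparatory reduction is handled the remaining steps reduce to routine bookkeeping with types and formulas, and (2) drops out of (1) with only the finite-versus-infinite pigeonhole refinement above.
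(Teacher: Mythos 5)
The paper states Fact \ref{Fsemifs} without proof, so there is no argument to compare against; your proof is correct and is the natural one. In particular you are right to flag the subtlety in the backward direction of (1): read literally, ``$\psi$ is not satisfied in $\mathcal C$'' for an arbitrary $\psi\in\tp(a/A\bar b)$ only yields $\psi\wedge\phi\vdash p$, not $\psi\vdash p$, since a realization of $\psi$ outside $\phi(\bar M)$ is neither in $\mathcal C$ nor in $p(\bar M)$. This is in fact how the paper itself uses the statement: in the proof of Lemma \ref{Lsemi}(iii) the witnessing formula is taken to be $\theta(x,b)\wedge\phi_{\mathfrak p}(x)$, not $\theta(x,b)$ alone. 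Your pigeonhole upgrade in (2)~$\Rightarrow$ is also sound, provided (as you do implicitly) the final formula $\chi\wedge\bigwedge_j\theta_j$ is conjoined with $\phi$ before invoking (1); this also correctly disposes of the degenerate case where $\mathcal C$ is finite, in which case the argument shows $a\in\Sem_A(\bar b)$ outright and both sides of (2) are false.
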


The Rudin-Keisler order on complete types was introduced by Lascar
in \cite{L1}. In an $\aleph_0$-stable theory it was related to
strong regularity; a nice exposition of the material can be found in Lascar's book
\cite{L2},  or in Poizat's book \cite{Po}. For non-algebraic types
$p,q\in S(M)$ define $p\leq_{RK} q$ iff every model $M'\supset M$
which realizes $p$ also realizes $q$. $\leq_{RK}$ is a quasi-order
which particularly well behaves in an $\aleph_0$-stable theory; there,
due to the existence of prime models over arbitrary sets, omitting
types is much easier than in general. In an $\aleph_0$-stable
theory  RK-minimal elements exist  and they are precisely the strongly
regular types. Some of equivalent ways of defining $p\leq_{RK} q$
in the $\aleph_0$-context are:
\begin{enumerate}
\item $p$ is realized  in $M(q)$ (the model prime over $M$ and a realization of $q$);

\item    There are $\bar a\models p$ and $\bar b \models q$ such that $\tp(\bar a/M\bar b)$ is isolated;

\item  There are $\bar a\models p$ and $\bar b \models q$ such that $  \bar a\in \Sem_M(\bar b)$.
\end{enumerate}
In this article we will consider a variant of the RK-order,
defined only for global types. In the unstable  context   only the
third  equivalent is adequate.

\begin{defn}\label{Drkorder} \ $\mathfrak
p\leq_{RK}\mathfrak q$ \ if and only if  there are $\bar a\models
\mathfrak p$ and $\bar b \models \mathfrak q$ such that $\bar a\in \Sem_{\bar M}(\bar
b)$.
\end{defn}

Transitivity of semi-isolation implies that we have a
quasi-order. However, the order  can be quite trivial: take the
theory of the random graph and notice that no two distinct
global 1-types are $\leq_{RK}$-comparable.

\smallskip
If both $\mathfrak p\leq_{RK}\mathfrak q$ and $\mathfrak
q\leq_{RK}\mathfrak p$ hold then we say that $\mathfrak p$ and
$\mathfrak q$ are {\em RK-equivalent} and  denote it   by $\mathfrak
p\sim_{RK}\mathfrak q$. $\mathfrak p$ and $\mathfrak q$ are {\em
strongly RK-equivalent}, or $\mathfrak p\equiv_{RK}\mathfrak q$,
if there are $\bar a\models \mathfrak p$ and $\bar b\models
\mathfrak q$ such that both $\bar a\in\Sem_{\bar M}(\bar b)$ and
$\bar b\in\Sem_{\bar M}(\bar a)$ hold.   RK-equivalent types   may
not be strongly RK-equivalent.

\section{Regularity}\label{Sregularity}

In this section we will re-define regularity for global invariant
types. To simplify notation we  define it for global 1-types only.
This will not affect the generality because we can always switch
to an appropriate sort in $\bar M^{eq}$.  The definition given
here slightly differs in that we first define when $\mathfrak p$ is regular over $A$ (here $\mathfrak p$ is $A$-invariant), and then
repeat the original one: $\mathfrak p$ is regular if such a small
set $A$ exists. Concerning strong regularity, the definition
remains unchanged.

\begin{defn}\label{DefPTreg}
Let $\mathfrak p(x)$ be  a global non-algebraic  type and let $A$ be small.

\smallskip
(i) \ $\mathfrak p(x)$ is said to be \emph{regular over $A$} if  it is $A$-invariant  and for any $B\supseteq A$
and $a\models \mathfrak p_{\strok A}$: \ either $a\models \mathfrak p_{\strok B}$ \
or \ $\mathfrak p_{\strok B}\vdash \mathfrak p_{\strok Ba}$.

\smallskip
(ii) \ $\mathfrak p$ is {\em regular} if $\mathfrak p$ is regular over some small set.
\end{defn}

Clearly, if $\mathfrak p$ is regular over $A$  and $A\subseteq B$ then
$\mathfrak p$ is regular over $B$, too.  The same observation holds
for strong regularity. But, before defining  strong regularity  it
is convenient to introduce the following notation: we will say
that $(\mathfrak p(x),\phi(x))$ is an {\em $A$-invariant pair} if
$\mathfrak p$ is $A$-invariant and $ \phi(x)\in \mathfrak
p_{\strok A}$.

\begin{defn}
(i)   $(\mathfrak p(x),\phi(x))$ is \emph{strongly regular} if for
some small $A$  it is an $A$-invariant pair and:
\begin{center}
   for all $B\supseteq A$ and $a$ satisfying $\phi(x)$: \ either
$a\models \mathfrak pp_{\strok B}$   or    $\mathfrak pp_{\strok B}\vdash
\mathfrak pp_{\strok Ba}$.
\end{center}

(ii) \ $\mathfrak p$ is {\em strongly regular} if $(\mathfrak p,\phi(x))$ is strongly regular for some $\phi(x)\in \mathfrak p$.
\end{defn}

We will prove in Proposition \ref{PequivSR} that as a witness set
$A$ in the previous definition we can take any small $A$ for which
$(\mathfrak p,\phi)$ is $A$-invariant. For, we need to label a
local regularity condition.

\begin{defn}\label{Dwor}
Suppose that  $p\in S(A)$ and $\pi\subseteq p$.  We say that
$(p,\pi)$ satisfies {\em the weak orthogonality condition}, or
(WOR) for short, if:
\begin{center}
$p\,\wor\, \tp(\bar b/ A)$ \ \ \ for all
$\bar{b}\subset\pi(\bar{M})\setminus p(\bar M)$.\end{center}
\end{defn}

WOR is a technical property of locally strongly regular types, see
Definition 7.1 in \cite{PT}. Examples of such types are "generic"
types of minimal and quasi-minimal structures. Recall that $M$ is
a minimal structure iff any definable subset with parameters is
either finite or co-finite. In  a minimal structure  there is a unique
non-algebraic type  $p\in S_1(M)$. 
$(p(x),x=x)$ satisfies WOR,  so $p$ is locally strongly regular via $x=x$.
By Corollary 7.1 from \cite{PT} the same is true if $p$ is the
"generic" type of a quasi-minimal structure (the type containing
all the formulas with a co-countable solution set).

\begin{rmk}\label{Rwor}
  Some of equivalent ways  of expressing the fact  that $(p,\pi)$
satisfies WOR are: \ \
\begin{enumerate}
\item \ \ \  $p\,\vdash p\,|\,A\bar{b}$ \ \  for all
$\bar{b}\subset\pi(\bar{M})\smallsetminus p(\bar M)$

\item \ \ \ $p\vdash p\,|A\cup(\pi(\bar M)\smallsetminus p(\bar
M))$

\end{enumerate}
\end{rmk}

\begin{lem}\label{Lworinv}
Suppose that $(\mathfrak  p(x), \phi(x))$ is   $A$-invariant,
$A\subseteq B$,    and   $(\mathfrak p_{\strok B},\phi)$ satisfies
WOR. Then $(\mathfrak p_{\strok A},\phi)$ satisfies WOR, too.
\end{lem}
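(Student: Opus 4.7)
The plan is to unpack the WOR condition via Remark \ref{Rwor}(1) and then transport the hypothesis over $B$ down to $A$ by an automorphism argument, exploiting $A$-invariance of $\mathfrak p$.

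Fix $\bar b\subset \phi(\bar M)\smallsetminus \mathfrak p_{\strok A}(\bar M)$ and $a\models \mathfrak p_{\strok A}$; by Remark \ref{Rwor}(1) it suffices to show that $a\models \mathfrak p_{\strok A\bar b}$. The basic observation is monotonicity of the set of realizations: since $\mathfrak p_{\strok B}$ contains more formulas than $\mathfrak p_{\strok A}$, we have $\mathfrak p_{\strok B}(\bar M)\subseteq \mathfrak p_{\strok A}(\bar M)$, and consequently
\[
\phi(\bar M)\smallsetminus \mathfrak p_{\strok A}(\bar M)\;\subseteq\;\phi(\bar M)\smallsetminus \mathfrak p_{\strok B}(\bar M).
\]

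Now pick an automorphism $f\in \Aut(\bar M/A)$ such that $a^{*}:=f(a)\models \mathfrak p_{\strok B}$; such an $f$ exists because every realization of $\mathfrak p_{\strok A}$ can be moved (over $A$) to a realization of the extension $\mathfrak p_{\strok B}$. Set $\bar b^{*}:=f(\bar b)$. Since $f$ fixes $A$ we have $\bar b^{*}\equiv_A \bar b$, so $\bar b^{*}\subset \phi(\bar M)\smallsetminus \mathfrak p_{\strok A}(\bar M)$, and by the inclusion above, $\bar b^{*}\subset \phi(\bar M)\smallsetminus \mathfrak p_{\strok B}(\bar M)$. By the assumed WOR for $(\mathfrak p_{\strok B},\phi)$, applied to $\bar b^{*}$, we obtain $\mathfrak p_{\strok B}\vdash \mathfrak p_{\strok B\bar b^{*}}$; since $a^{*}\models \mathfrak p_{\strok B}$, in particular $a^{*}\models \mathfrak p_{\strok A\bar b^{*}}$.

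It remains to transfer this back through $f^{-1}$. Let $\psi(x,\bar b)\in \mathfrak p_{\strok A\bar b}$, i.e.\ $\psi(x,\bar b)\in \mathfrak p$ with $\psi$ over $A$. Using $A$-invariance of $\mathfrak p$ together with $\bar b^{*}\equiv_A\bar b$, we deduce $\psi(x,\bar b^{*})\in \mathfrak p$, hence $\psi(x,\bar b^{*})\in \mathfrak p_{\strok A\bar b^{*}}$; therefore $\models \psi(a^{*},\bar b^{*})$. Applying $f^{-1}$, which fixes the parameters of $\psi$ in $A$, yields $\models \psi(a,\bar b)$. Thus $a\models \mathfrak p_{\strok A\bar b}$, completing the proof. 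The only delicate point is the last step, where one must be careful to separate the roles of $A$-invariance (which lets formulas move between $\bar b$ and $\bar b^{*}$ inside $\mathfrak p$) from the action of $f$ (which transports satisfaction from $a^{*}$ to $a$); no compactness or further model-theoretic machinery is required.
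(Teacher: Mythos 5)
Your proof is correct, and it takes a genuinely different (dual) route from the paper's. The paper argues by contradiction: assuming $\bar c$ violates WOR over $A$, it picks two realizations $a',a''$ of $\mathfrak p_{\strok A}$ disagreeing on $\varphi(x,\bar c)$, then conjugates the \emph{base set} $B$ to two copies $B',B''$ over $A$ so that $a'\models\mathfrak p_{\strok B'}$ and $a''\models\mathfrak p_{\strok B''}$; $A$-invariance shows both restricted pairs satisfy WOR, which forces $\mathfrak p_{\strok B'}\vdash\varphi(x,\bar c)$ and $\mathfrak p_{\strok B''}\vdash\neg\varphi(x,\bar c)$, and a common realization of the compatible types $\mathfrak p_{\strok B'}$ and $\mathfrak p_{\strok B''}$ yields the contradiction. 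You instead keep $B$ fixed and conjugate the \emph{data} $(a,\bar b)$ by a single $f\in\Aut(\bar M/A)$ sending $a$ onto a realization of $\mathfrak p_{\strok B}$, then pull the conclusion back through $f^{-1}$; this gives a direct (non-contradictory) argument and avoids the step of realizing $\mathfrak p_{\strok B'B''}$. Both proofs hinge on the same two ingredients — $A$-invariance of $\mathfrak p$ and the freedom to conjugate over $A$ — so the content is the same, but your version is a bit more streamlined, and it exposes more cleanly that the whole lemma is just ``$A$-invariance lets you transport WOR along an automorphism.'' One small stylistic note: your step (3), the inclusion $\phi(\bar M)\smallsetminus\mathfrak p_{\strok A}(\bar M)\subseteq\phi(\bar M)\smallsetminus\mathfrak p_{\strok B}(\bar M)$, and the observation that $\Aut(\bar M/A)$ preserves the left-hand side, are exactly what the paper compresses into ``$\bar c\in\phi(\bar M)\smallsetminus\mathfrak p_{\strok A}(\bar M)$ so both $\mathfrak p_{\strok B'}\vdash\mathfrak p_{\strok B'\bar c}$ and $\mathfrak p_{\strok B''}\vdash\mathfrak p_{\strok B''\bar c}$ hold''; spelling it out as you do is the right instinct.
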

\begin{proof}
Suppose, on the contrary, that   $\bar c \subset \phi(\bar
M)\smallsetminus \mathfrak p_{\strok A}(\bar M)$  is such that
$\mathfrak p_{\strok A}\nvdash \mathfrak p_{\strok A\bar c}$. Then
there are $a',a''$ realizing $\mathfrak p_{\strok A}$ and a formula
$\varphi(x,\bar y)$ (over $A$) such that $\models \varphi(a',\bar
c)\wedge\neg\varphi(a'',\bar c)$. \ Let $B',B''$ be such that
$B\equiv B'\equiv B''\,(A)$, $a'\models \mathfrak p_{\strok B'}$ and
$a''\models \mathfrak p_{\strok B''}$. \ Note that the
$A$-invariance of $\mathfrak  p$ implies that  both $(\mathfrak p_{\strok B'},\phi)$ and $(\mathfrak p_{\strok B''},\phi)$ satisfy WOR.
Since $\bar c\in \phi(\bar M)\smallsetminus \mathfrak p_{\strok A}(\bar
M)$ both  $\mathfrak p_{\strok B'}\vdash \mathfrak p_{\strok B'\bar c}$
and $\mathfrak p_{\strok B''}\vdash \mathfrak p_{\strok B''\bar c}$
hold. In particular:
\begin{center}
$\mathfrak p_{\strok B'}(x)\vdash \varphi(x,\bar c)$ \ \ and  \ \
$\mathfrak p_{\strok B''}(x)\vdash \neg\varphi(x,\bar c)$
\end{center}
Let $a$ realize $\mathfrak p_{\strok B'B''}$. Then $\models \varphi(a,\bar
c)\wedge\neg\varphi(a,\bar c)$. A contradiction.
\end{proof}

\begin{prop}\label{PequivSR}
(i) An $A$-invariant pair   $(\mathfrak{p}(x),\phi(x))$ is
strongly regular if and only if  $(\mathfrak p_{\strok B},\phi)$
satisfies WOR for all $B\supseteq A$.

\smallskip(ii) An $A$-invariant pair   $(\mathfrak{p}(x),\phi(x))$ is
strongly regular if and only if: for all $B\supseteq A$ and $a$
satisfying $\phi(x)$: \ either $a\models \mathfrak p\,|\,B$   or
$\mathfrak p_{\strok B}\vdash \mathfrak p_{\strok Ba}$. Therefore,
as a witness set $A$ in the definition of strong regularity we can
take any small set $A$ over which $(\mathfrak p,\phi)$ is
invariant.
\end{prop}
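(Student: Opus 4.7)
The plan is to reduce both parts to a uniform WOR condition and then shift the base set via Lemma \ref{Lworinv}. Observe first that the clause ``for all $a$ satisfying $\phi$: either $a\models \mathfrak p_{\strok B}$ or $\mathfrak p_{\strok B}\vdash \mathfrak p_{\strok Ba}$'' is nothing but the singleton case of WOR for the pair $(\mathfrak p_{\strok B},\phi)$, whereas the full WOR condition (in the form of Remark \ref{Rwor}(1)) is the same statement but for finite tuples $\bar b$ in place of $a$. Thus strong regularity with some witness $A'$ amounts to singleton-WOR at every $B\supseteq A'$, and what must be proved is both the upgrade to tuple-WOR and the ability to shift the witness to $A$.

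For the upgrade, I would argue by induction on $n$ that if singleton-WOR holds at every $B\supseteq A'$, then for every $\bar b=(b_1,\ldots,b_n)\subset \phi(\bar M)\setminus \mathfrak p_{\strok B}(\bar M)$ we have $\mathfrak p_{\strok B}\vdash \mathfrak p_{\strok B\bar b}$. The inductive step exploits monotonicity: once $\mathfrak p_{\strok B}\vdash \mathfrak p_{\strok Bb_1\cdots b_n}$ is secured, the realization sets shrink, so $b_{n+1}\notin \mathfrak p_{\strok B}(\bar M)$ forces $b_{n+1}\notin \mathfrak p_{\strok Bb_1\cdots b_n}(\bar M)$. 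Singleton-WOR at the enlarged base $B\cup\{b_1,\ldots,b_n\}\supseteq A'$ then yields $\mathfrak p_{\strok Bb_1\cdots b_n}\vdash \mathfrak p_{\strok B\bar b}$, and composition of implications closes the step.

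For (i), this gives tuple-WOR at every $B\supseteq A'$. To descend to an arbitrary $B\supseteq A$, set $B^{*} = B\cup A'$: tuple-WOR holds at $B^{*}$, the pair is $A$-invariant and therefore $B$-invariant, so Lemma \ref{Lworinv} applied with $B$ in the role of $A$ and $B^{*}$ in the role of $B$ transfers WOR from $B^{*}$ down to $B$. The converse of (i) is immediate, since singleton-WOR is a special case of WOR, so if WOR holds at every $B\supseteq A$ then $A$ itself witnesses strong regularity.

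Part (ii) is then a direct corollary: the listed condition is precisely singleton-WOR at every $B\supseteq A$, and by the above this is equivalent both to tuple-WOR at every $B\supseteq A$ and to strong regularity; in particular, any small set of invariance can serve as the witness. The main technical obstacle is the inductive upgrade from singleton to tuple, whose success depends on the monotonicity $\mathfrak p_{\strok B'}(\bar M)\subseteq \mathfrak p_{\strok B}(\bar M)$ for $B\subseteq B'$, together with the fact that adjoining semi-isolated realizations to $B$ does not enlarge $\mathfrak p_{\strok B}(\bar M)$, via the already-established implication $\mathfrak p_{\strok B}\vdash \mathfrak p_{\strok B'}$.
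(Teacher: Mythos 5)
Your proof is correct and takes essentially the same route as the paper's: you upgrade the singleton regularity clause to full tuple-WOR by chaining (the paper writes the chain out explicitly, you phrase it as an induction), then descend from the auxiliary witness set to $A$ via Lemma~\ref{Lworinv}, exactly as the paper does, and part (ii) is read off just as in the paper. One small remark: the inclusion $\mathfrak p_{\strok Bb_1\cdots b_n}(\bar M)\subseteq \mathfrak p_{\strok B}(\bar M)$ in your inductive step is automatic (adding parameters only restricts a type), so the appeal in your closing sentence to the already-established implication $\mathfrak p_{\strok B}\vdash \mathfrak p_{\strok Bb_1\cdots b_n}$ is superfluous for that step.
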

\begin{proof}(i)
$\Leftarrow$) is easy, so we prove only $\Rightarrow$). Suppose
that $(\mathfrak{p}(x),\phi(x))$ is strongly regular. Let $A_1\supseteq  A$
be  such that the regularity condition holds:
\begin{center} for all $B_1\supseteq A_1$ and $a$ satisfying
$\phi(x)$: either \ $a\models \mathfrak p_{\strok B_1}$ \
or \  $\mathfrak p_{\strok B_1}\vdash \mathfrak p_{\strok B_1a}$.
\end{center}
Fix $B_1\supseteq A_1$   and we will show that $(p,\phi)$
satisfies WOR (where $p=\mathfrak p_{\strok B_1}$).  Suppose that
$b_1...b_n=\bar b\subset \phi(\bar M)\smallsetminus p(\bar M)$.
Apply the regularity condition to $(b_1, B_1)$: $b_1\in \phi(\bar
M)\smallsetminus \mathfrak p_{\strok B_1}(\bar M)$ so $ p(x)\vdash
\mathfrak p_{\strok B_1b_1}(x)$; then apply it to $(b_2,B_1b_1)$: $b_2\in
\phi(\bar M)\smallsetminus \mathfrak p_{\strok B_1}(\bar M)$ so $
\mathfrak p_{\strok B_1b_1}(x)\vdash \mathfrak p_{\strok B_1b_1b_2}(x)$;
continuing in this way we get:
$$ p(x)\vdash \mathfrak p_{\strok B_1b_1}(x)\vdash \mathfrak p_{\strok B_1b_1b_2}(x) \vdash ...\vdash \mathfrak p_{\strok B_1b_1...b_{n}}(x)$$
Thus $p(x)\vdash \mathfrak p_{\strok B_1\bar b}(x)$ and $(p,\phi)$ satisfies WOR. Now let $B\supseteq A$. Then $(\mathfrak p_{\strok BA_1},\phi)$ sastisfies WOR so, by Lemma  \ref{Lworinv}, $(\mathfrak p_{\strok B},\phi)$ satisfies WOR, too.

\smallskip
(ii) Follows from  part (i).
\end{proof}

\begin{rmk}\label{Rworreg}
Suppose that  $\mathfrak{p}(x)$ is   non-algebraic and
$A$-invariant.  As in the proof of Proposition \ref{PequivSR}(i) one checks that $\mathfrak p$ is regular over $A$\ if 
only if \ $\mathfrak p$ is $A$-invariant and $(\mathfrak p_{\strok
B},\mathfrak p_{\strok A})$ satisfies WOR for any (finite) extension
$B\supseteq A$. The stronger   equivalence, like the  one that we
have  established for strongly  regular types in Proposition
\ref{PequivSR},  would be: an $A$-invariant type is regular iff
$\mathfrak p$ is regular over $A$. However that does not seem to hold:
it is likely that there is a regular, $A$-invariant type which is not regular over $A$ (but we don't know of an
example).
\end{rmk}

The following fact, suggested  by Anand Pillay, shows that the
stronger equivalence  holds for regular types under additional
assumptions:

\begin{prop}
Suppose that  $\mathfrak{p}(x)$ is  definable and $M$-invariant.
Then $\mathfrak p$ is regular if and only if it is regular over $M$.
\end{prop}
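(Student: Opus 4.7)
The plan is to prove the nontrivial direction: assume $\mathfrak p$ is regular over some small $A$, and deduce that $\mathfrak p$ is regular over $M$. Since regularity is preserved under extending the base, I may replace $A$ by $A\cup M$ and so assume $A\supseteq M$. Because $\mathfrak p$ is both definable and $M$-invariant, its defining scheme is itself $M$-invariant, so $\mathfrak p$ is in fact definable over $M$; let $d\phi(\bar y)$ denote the defining formula of $\phi(x,\bar y)$ over $M$.

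Fix $B\supseteq M$ and $a\models \mathfrak p_{\strok M}$ with $a\not\models \mathfrak p_{\strok B}$. I need to show $\mathfrak p_{\strok B}\vdash \mathfrak p_{\strok Ba}$, i.e.\ that every $c\models \mathfrak p_{\strok B}$ has $\tp(c/Ba)=\mathfrak p_{\strok Ba}$. Fix such a $c$. The key step is a coheir-style construction of an $M$-conjugate of $A$: by a standard coheir argument (extending $\tp(A/M)$ to an $M$-finitely-satisfiable type in $S(MBca)$), there exists a tuple $A^*$ with $A^*\equiv A\,(M)$ such that $\tp(A^*/MBca)$ is finitely satisfiable in $M$. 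Because $\mathfrak p$ is $M$-invariant and $A\equiv A^*\,(M)$, regularity of $\mathfrak p$ transfers from $A$ to $A^*$.

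The main technical claim is that $a\models\mathfrak p_{\strok A^*}$ and $c\models \mathfrak p_{\strok A^*B}$. I verify the second; the first follows by the same argument with $B$ replaced by $M$. Given $\phi(x,\bar b,\bar y)$ over $M$ with $\bar b\in B$, set $\Theta(\bar y):=\phi(c,\bar b,\bar y)\leftrightarrow d\phi(\bar b,\bar y)$, a formula over $MBc$. For any $\bar m\in M$ one has $\bar b\bar m\in B$, and $c\models \mathfrak p_{\strok B}$ combined with $M$-definability yields $\Theta(\bar m)$. Finite satisfiability of $\tp(A^*/MBc)$ in $M$ then forces $\Theta(A^*)$; letting $\phi$ and $\bar b$ vary, this is precisely $c\models \mathfrak p_{\strok A^*B}$.

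Granting the claim, apply regularity of $\mathfrak p$ over $A^*$ to the realization $a\models \mathfrak p_{\strok A^*}$ and the extension $A^*B\supseteq A^*$. Since $a\not\models \mathfrak p_{\strok A^*B}$ (because $a\not\models \mathfrak p_{\strok B}$), we obtain $\mathfrak p_{\strok A^*B}\vdash \mathfrak p_{\strok A^*Ba}$; combined with $c\models \mathfrak p_{\strok A^*B}$ this gives $\tp(c/A^*Ba)=\mathfrak p_{\strok A^*Ba}$, and therefore $\tp(c/Ba)=\mathfrak p_{\strok Ba}$, as required. The step I expect to demand the most care is producing a single coheir $A^*$ that simultaneously witnesses both ``$a$ is generic over $A^*$'' and ``$c$ is generic over $A^*B$''; once $\mathfrak p$ is known to be $M$-definable, the rest reduces to routine finite-satisfiability bookkeeping.
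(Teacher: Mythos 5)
Your proof is correct and takes essentially the same approach as the paper's: both produce an $M$-conjugate $A^*$ (the paper's $A_1$) of a regularity base $A\supseteq M$ via an $M$-finitely-satisfiable (coheir) extension of $\tp(A/M)$, use $M$-definability of $\mathfrak p$ together with that coheir to transfer genericity of the relevant realizations over to $A^*$, and then invoke regularity over $A^*$. The only cosmetic difference is that the paper phrases the conclusion as a contradiction with regularity over $A_1$, while you derive $\tp(c/Ba)=\mathfrak p_{\strok Ba}$ directly.
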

\begin{proof}Only
$\Rightarrow$) requires a proof. Suppose that $\mathfrak p$ is
regular and  let $A\supseteq M$ be such that $\mathfrak p$ is regular over $A$. We claim that $\mathfrak p$ is regular over $M$, too.
Otherwise, for some  $\bar c$  there are $a,b$ realizing $\mathfrak
p_{\strok M}$   such that $a\models \mathfrak p_{\strok M\bar c}$,
$b\nmodels \mathfrak p_{\strok M\,\bar c}$ and $a\nmodels \mathfrak
p_{\strok M\bar cb}$. Let $A_1\equiv A\,(M)$ be such that
$\tp(a,b,\bar c/A_1)$ is a heir of $\tp(a,b,\bar c/M)$. Then  both
$a$ and $b$ realize  $\mathfrak p_{\strok A_1}$, because $\mathfrak p$ is a heir of  $\mathfrak p_{\strok M}$.

Suppose $a\nmodels \mathfrak p_{\strok A_1\,\bar c}$ and find $\bar
d\in A_1$ and $\varphi$ such that $\models \varphi(a,\bar c,\bar
d)\wedge d_{\mathfrak p}\neg\varphi(t,\bar c,\bar d)$.   Since
$\tp(\bar d/\bar caM)$ is a coheir, there exists $\bar d'\in M$
such that $\models \varphi(a,\bar c,\bar d')\wedge d_{\mathfrak
p}\neg\varphi(t,\bar c,\bar d')$; hence $a\nmodels\mathfrak
p_{\strok M\bar c}$. A contradiction. We conclude $a\models \mathfrak
p_{\strok A_1\,\bar c}$.

Therefore $a\models \mathfrak p_{\strok A_1\,\bar c}$,\, $b\models
\mathfrak p_{\strok A_1}$ and $a\nmodels \mathfrak p_{\strok
A_1\,b\,\bar c}$, so  $\mathfrak p$ is not regular over $A_1$. A
contradiction.
\end{proof}

Suppose that $\mathfrak p$ is $A$-invariant and let $p=\mathfrak
p_{\strok A}$.  Define $\cl_{\mathfrak p,A}$ as an operation on the
power set of $p(\bar M)$:
\begin{center}
$\cl_{\mathfrak p,A}(X)=\{ b\in p(\bar M)\,|\,b\nmodels \mathfrak
p_{\strok A\,X} \}$ \ \ for all $X\subset p(\bar M)$.
\end{center}
If $\mathfrak p$ is regular over $A$ then, by Lemma 3.1(iii) from
\cite{PT}, $\cl_{\mathfrak p,A}$ a closure operator on $p(\bar
M)$. The proof of this fact does not depend on Remark 3.1 there,
neither does the proof of Theorem 3.1 there, which is a  dichotomy
theorem for regular types. Here we state only a restricted version
and we will use only the first part:

\begin{thm1}
Suppose that  $\mathfrak p$ is regular over $A$. Then
$\cl_{\mathfrak p,B}$ is a closure operator on $\mathfrak p_{\strok
B}(\bar M)$ for all $B\supseteq A$. We have two kinds of regular types:

\smallskip
(1) \ Symmetric ($\mathfrak p$ is symmetric). Then $\cl_{\mathfrak
p,B}$ is a pregeometry operator on $p(\bar{M})$ for all $B\supseteq A$.

\smallskip
(2) \ Asymmetric. Then   there exists  a finite extension $A_0$ of
$A$ and an  $A_0$-definable  partial order $\leq$ such that every
Morley sequence in $p$ over $A_0$ is strictly increasing; $\cl_{\mathfrak
p,A_0}$ is not a pregeometry.
\end{thm1}

In this paper we will deal only with  symmetric regular types.
Then the pregeometry describes the independence:
$(a_i\,|\,i\in\alpha)$ is a Morley sequence in $\mathfrak p$ over
$A$ if and only if it is $\cl_{\mathfrak p,A}$-independent. In
particular, maximal Morley sequences in any $M\supseteq A$ have
the same cardinality, so $\dim_{\mathfrak p}(M/A)$ is a well
defined cardinal number.

\section{Orthogonality}\label{Sorth}

In this section we study orthogonality of regular symmetric types.
Our goal is to prove Theorem \ref{Tgenstabnor}. We start by
mentioning a result from \cite{MT}; it will not be  used further in
the text:

\begin{thm1}
A regular asymmetric type is orthogonal to any symmetric invariant
type. In particular, symmetry is preserved under non-orthogonality
of regular types.
\end{thm1}

\begin{que}
Is $\nor$ an equivalence relation on the set of all
regular symmetric types?
\end{que}

Below, a positive answer will be given  for generically stable types.

\begin{lem}\label{L1} Suppose that $\mathfrak{p}$ and $\mathfrak{q}$ are $A$-invariant and that
$\mathfrak p$ is regular over $A$ and  symmetric. Also  suppose that
$\bar b\models \mathfrak{q}_{\strok A}
=q$ and  $a\models\mathfrak p_{\strok A}=p$ are such that $a$ does not
realize $\mathfrak{p}_{\strok A\bar b}$ and let  $\phi(x,\bar
b)\in\tp(a/A\bar b)\smallsetminus\mathfrak{p}_{\strok A\bar b}$.
Then exactly one  of the following two conditions holds:

\smallskip  (A) \ Whenever $\{ a_i
\,|\,i\in\omega+\omega\}$ is a Morley sequence in $\mathfrak{p}$
over $A$ then   \begin{center} $q(\bar y)
\cup\{\phi(a_i,\bar y)\,|\,i\in\omega\}\cup \{\neg\phi
(a_{\omega+i},\bar y)\,|\,i\in\omega\} \ \ \textmd{is
consistent.}$\end{center}

\smallskip  (B) \    $p(x)\cup\mathfrak{q}^2_{\strok A}(\bar y_1,\bar y_2)\cup \{\phi(x,\bar
y_1)\wedge \phi(x,\bar y_2)\}$ \ is inconsistent.

\smallskip\noindent  Moreover, if $\mathfrak p$ is generically stable then (B) holds.

\end{lem}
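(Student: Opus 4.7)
The plan is to establish the dichotomy ``exactly one of $(A)$ and $(B)$ holds'' first, and then deduce the ``moreover'' clause as a direct consequence. The organizing observation is that by $A$-invariance of $\mathfrak{q}$ and the fact that all realizations of $p=\mathfrak{p}_{\strok A}$ have the same type over $A$, either $\phi(c,\bar{y})\in\mathfrak{q}$ for every $c\models p$ (Case~1), or $\neg\phi(c,\bar{y})\in\mathfrak{q}$ for every such $c$ (Case~2). In Case~1, $(B)$ is refuted immediately by taking any $c\models p$ and a Morley pair $(\bar{b}_1,\bar{b}_2)\models\mathfrak{q}^2_{\strok Ac}$: then $\phi(c,\bar{b}_1)\wedge\phi(c,\bar{b}_2)$ and $(\bar{b}_1,\bar{b}_2)\models\mathfrak{q}^2_{\strok A}$.

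The heart of the proof is showing $(A)\Leftrightarrow\lnot(B)$.  Given any witness $\bar{b}'$ of $(A)$ for a Morley sequence $(a_i)_{i<\omega+\omega}$, I would extend $\bar{b}'$ to a Morley sequence $(\bar{b}'_k)_{k<\omega}$ in $\mathfrak{q}$ over $A$ by setting $\bar{b}'_0=\bar{b}'$ and $\bar{b}'_{k+1}\models\mathfrak{q}_{\strok A\bar{b}'_{\le k}\,a_{<\omega+\omega}}$; by $A$-invariance of $\mathfrak{q}$ and $a_i\equiv_A a$, each $\bar{b}'_{k+1}$ satisfies $\phi(a_i,\bar{b}'_{k+1})$ uniformly in $i$ (Case~1) or $\neg\phi(a_i,\bar{b}'_{k+1})$ uniformly (Case~2). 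In Case~1 this gives $\phi(a_0,\bar{b}'_0)\wedge\phi(a_0,\bar{b}'_1)$ with $(\bar{b}'_0,\bar{b}'_1)$ a Morley pair, contradicting $(B)$. In Case~2 no direct refutation of $(B)$ comes from this configuration, so I would invoke the regularity of $\mathfrak{p}$: the hypothesis $\phi(x,\bar{b})\notin\mathfrak{p}_{\strok A\bar{b}}$ combined with $A$-invariance of $\mathfrak{p}$ forces $\neg\phi(x,\bar{b}')\in\mathfrak{p}_{\strok A\bar{b}'}$, so every $a_i$ ($i<\omega$) belongs to $\cl_{\mathfrak{p},A}(\bar{b}')$; iterating the regularity clause yields $\mathfrak{p}_{\strok A\bar{b}'}\vdash\mathfrak{p}_{\strok A\bar{b}'a_{<\omega}}$. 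Together with the pregeometry coming from symmetry and the Case~2 hypothesis (which controls how $\bar{b}'$ interacts with the tail $a_{\omega+i}$), this should preclude $(A)$ and force $(B)$. The ``moreover'' clause is then direct: for any Morley sequence $(a_i)_{i<\omega+\omega}$ in $\mathfrak{p}$ over $A$ and any $\bar{b}'\models q$, generic stability of $\mathfrak{p}$ makes $\{i:\models\phi(a_i,\bar{b}')\}$ finite or cofinite, so it cannot equal the proper initial segment $\omega$; hence $(A)$ fails and $(B)$ holds by the dichotomy.

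The main obstacle is the Case~2 portion of the dichotomy.  The $A$-invariance of $\mathfrak{q}$ controls extensions only over $A$-sets, while the regularity clause controls $\mathfrak{p}$-extensions; translating the closure condition $a_i\in\cl_{\mathfrak{p},A}(\bar{b}')$ into a contradiction with the specific shape of $(A)$ (the split between the two copies of $\omega$) requires careful use of total indiscernibility of Morley sequences in $\mathfrak{p}$ and a compactness argument balancing the two invariance structures.
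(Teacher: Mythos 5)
Your case split based on whether $\phi(c,\bar y)\in\mathfrak q$ for $c\models p$ is a reasonable organizing device, but the argument built on it has real gaps in both branches, and you acknowledge as much in the final paragraph. In Case~1 you refute (B), which is fine, but to get ``exactly one'' you must also establish that (A) \emph{holds}; the step beginning ``Given any witness $\bar b'$ of (A)'' assumes (A) and can therefore only rederive the (already known) failure of (B). What is missing is a construction of a witness for (A): e.g.\ take $\bar b''\models\mathfrak q_{\strok Aa_{<n}}$ (so $\phi(a_i,\bar b'')$ for $i<n$ by the Case~1 hypothesis), follow it by $a'_0,\dots,a'_{n-1}$ Morley over $Aa_{<n}\bar b''$ (so $\neg\phi(a'_j,\bar b'')$ since $\neg\phi(x,\bar b'')\in\mathfrak p_{\strok A\bar b''}$ by $A$-invariance), and use that all length-$2n$ Morley sequences in $\mathfrak p$ over $A$ are $A$-conjugate; none of this is in your proposal. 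In Case~2 the argument never closes: the conclusion $a_i\in\cl_{\mathfrak p,A}(\bar b')$ for all $i<\omega$ (so $\mathfrak p_{\strok A\bar b'}\vdash\mathfrak p_{\strok A\bar b'a_{<\omega}}$) does not by itself ``preclude (A)'' --- the set $\{c\in p(\bar M):c\nmodels\mathfrak p_{\strok A\bar b'}\}$ is $\cl_{\mathfrak p,A}$-closed but may well have infinite rank, so containing the independent family $\{a_i:i<\omega\}$ is not a contradiction. And the claim that Case~2 ``forces (B)'' is unsupported: a putative realization $(c,\bar b_1,\bar b_2)$ of the type in (B) need not have $\bar b_1,\bar b_2$ generic over $Ac$, so the Case~2 assumption $\neg\phi(c,\bar y)\in\mathfrak q$ gives no information about $\phi(c,\bar b_j)$.

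The paper's proof runs along a genuinely different line and avoids the case split entirely. It assumes both (A) and (B) fail, extracts from $\neg$(A) (via compactness and totally-indiscernibility) a maximal $n_\phi$ with $q(\bar y)\cup\{\phi(a_i,\bar y):i<n_\phi\}$ consistent, and notes that any realization $\bar b$ of this partial type pins $\phi(\bar M,\bar b)\cap p(\bar M)$ inside $\cl_{\mathfrak p,A}(a_0\dots a_{n_\phi-1})$. Taking $m>n_\phi$ independent copies $\bar c_0,\dots,\bar c_m$ of that $n_\phi$-tuple, matching $\bar b_i$'s, and one $\bar b'$ generic over all of them, $\neg$(B) supplies $d_i\models p$ with $\phi(d_i,\bar b')\wedge\phi(d_i,\bar b_i)$; the pregeometry (symmetry) then makes $(d_0,\dots,d_m)$ independent, so $\bar b'$ realizes $q\cup\{\phi(d_i,\bar y):i\le m\}$ with $m>n_\phi$, contradicting maximality. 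The ``moreover'' clause you deduce correctly from the dichotomy, as does the paper; but the dichotomy itself is where your proposal does not yet constitute a proof.
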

\begin{proof}
Suppose that neither (A) nor  (B) are satisfied  and work for a contradiction. The failure of (A), by compactness, implies that for some $n$
\begin{equation}\label{e1.1} q(\bar y)  \cup\{\phi(a_i,\bar y)\,|\,1\leq i\leq
n\}\cup \{\neg\phi (a_{j},\bar y)\,|\, n< j \leq 2\,n\} \ \
\textmd{is inconsistent} \, ;\end{equation}
We {\em claim} that  $q(\bar y)
\cup\{\phi(a_i,\bar y)\,|\,i\leq n\}$ is inconsistent.
Otherwise it would be satisfied by some $\bar b'$ and whenever
$(a_{n+1}',\ldots, a_{2n}')$ is a Morley sequence in $\mathfrak p$
over $A\,\bar b'a_{\leq n}$ we would have $\models
\bigwedge_{i}\neg\phi(a_i',\bar b')$; this  is justified by $\neg\phi(x,\bar b')\in \mathfrak p$   which is implied  by:  $\neg\phi(x,\bar b)\in \mathfrak p$, the $A$-invariance of $\mathfrak p$, and $\tp(\bar b/A)=\tp(\bar b'/A)=q$. Therefore $\bar b'$ realizes
$$ q(\bar y)  \cup\{\phi(a_i,\bar y)\,|\,i\leq
n\}\cup \{\neg\phi (a_{j}',\bar y)\,|\, n< j \leq 2\,n\} \,$$which
is in contradiction with (\ref{e1.1}).

\smallskip
Let $n_{\phi}$ be   maximal   such that $q(\bar y)
\cup\{\phi(a_i,\bar y)\,|\,i<n_{\phi}\}$ is consistent and,
without loss of generality, assume that $\bar b$ realizes the
type. The  maximality of $n_{\phi}$ implies that no element of
$\phi(\bar M,\bar b)\cap p(\bar M)$ realizes $\mathfrak{p}_{\strok A
\bar c}$, where $\bar c$ denotes $a_0\ldots a_{n_{\phi}-1}$.  Hence
$\phi(\bar M,\bar b)\cap p(\bar
M)\subseteq\cl_{\mathfrak{p},A}(\bar c)$.

\smallskip
Fix $m> n_{\phi}$ and let  $\bar c_0,\bar c_1,\ldots,\bar c_{m}$
be a Morley sequence in $\mathfrak{p}^{n_{\phi}}_{\strok A}$. For
each $i\leq m$ choose $\bar b_i$   such that $\bar c_i\bar
b_i\equiv \bar c\bar b\,(A)$; note that $\phi(\bar M,\bar b_i)\cap
p(\bar M)\subseteq\cl_{\mathfrak{p},A}(\bar c_i)$ holds. Let $\bar
b'$ realize $\mathfrak{q}_{\strok A\bar c_{\leq m}\bar b_{\leq m}}$.
$(\bar b',\bar b_i)$ is a Morley sequence in $\mathfrak{q}$ over
$A$ for  each $i\leq m$, so the failure of (B) implies that \
$p(x) )\cup \{\phi(x,\bar b')\wedge \phi(x,\bar b_i)\}$ \ is
consistent;  let $d_i$ realize  it. Then $\models \phi(d_i,\bar
b_i)$ implies $d_i\in \cl_{\mathfrak{p},A}(\bar c_i)$. Since $\mathfrak p$ is symmetric, $\cl_{\mathfrak{p},A}$ is a pregeometry so the
$\cl_{\mathfrak{p},A}$-independence of $\bar c_i$'s implies that
$d_0,d_1,\ldots,d_{m}$ is a Morley sequence in $\mathfrak{p}_{\strok
A}$. Thus $\bar b'$ realizes \ $q(\bar y) \cup\{\phi(d_i,\bar
y)\,|\,i\leq m\}$;  this contradicts the maximality of $n_{\phi}$.
\end{proof}

In the next proposition we will  prove that generically stable
regular types "have weight 1 with respect to the $\otimes$-independence".

\begin{prop}\label{Pwt1}
Suppose that $\mathfrak{p}$,  $\mathfrak{q}$ and $\mathfrak{r}$
are (not necessarily distinct) $A$-invariant types and that
$(\mathfrak{p},A)$ is regular and generically stable. Let $\bar b\,\bar
c\models\mathfrak{q}\otimes\mathfrak{r}_{\strok A}$ and
$a\models \mathfrak{p}_{\strok A}$. Then at least one of
$a\models\mathfrak{p}_{\strok A\bar b}$ and
$a\models\mathfrak{p}_{\strok A\bar c}$ holds.
\end{prop}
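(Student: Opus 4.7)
The plan is a proof by contradiction. Assume that both $a\not\models\mathfrak p_{\strok A\bar b}$ and $a\not\models\mathfrak p_{\strok A\bar c}$, and pick formulas $\phi(x,\bar b)\in\tp(a/A\bar b)\setminus\mathfrak p_{\strok A\bar b}$ and $\psi(x,\bar c)\in\tp(a/A\bar c)\setminus\mathfrak p_{\strok A\bar c}$, so that $\neg\phi(x,\bar b),\neg\psi(x,\bar c)\in\mathfrak p$. Applying Lemma~\ref{L1} to the triples $(\mathfrak p,\mathfrak q,\bar b,a,\phi)$ and $(\mathfrak p,\mathfrak r,\bar c,a,\psi)$, the generic stability of $\mathfrak p$ rules out condition~(A) in each case, so condition~(B) holds in both: the partial types $p(x)\cup\mathfrak q^2_{\strok A}(\bar y_1,\bar y_2)\cup\{\phi(x,\bar y_1)\wedge\phi(x,\bar y_2)\}$ and $p(x)\cup\mathfrak r^2_{\strok A}(\bar z_1,\bar z_2)\cup\{\psi(x,\bar z_1)\wedge\psi(x,\bar z_2)\}$ are inconsistent.

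I then split according to whether $\phi(a,\bar y)\in\mathfrak q$. If so, choose $\bar b^*\models\mathfrak q$ over $A\bar b\bar c a$; then $\bar b^*\models\mathfrak q_{\strok Aa}$ yields $\phi(a,\bar b^*)$, while $\bar b^*\models\mathfrak q_{\strok A\bar b}$ makes $(\bar b,\bar b^*)$ a Morley pair in $\mathfrak q$ over $A$, and $a\models p$ together with $\phi(a,\bar b)\wedge\phi(a,\bar b^*)$ contradicts condition~(B) for $\phi$. A symmetric argument dispatches the case $\psi(a,\bar z)\in\mathfrak r$, using a generic $\bar c^*\models\mathfrak r$ over $A\bar b\bar c a$.

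The remaining case, $\neg\phi(a,\bar y)\in\mathfrak q$ together with $\neg\psi(a,\bar z)\in\mathfrak r$, is the main technical step. Using $\bar c\models\mathfrak r_{\strok A\bar b}$, construct a Morley sequence $(\bar c_i)_{i<\omega}$ in $\mathfrak r$ over $A\bar b$ with $\bar c_0=\bar c$, and for each $i$ let $a_i$ be the image of $a$ under an automorphism of $\bar M$ that fixes $A\bar b$ and sends $\bar c$ to $\bar c_i$. Then $a_i\models p$ with $\phi(a_i,\bar b)\wedge\psi(a_i,\bar c_i)$; condition~(B) for $\psi$ applied to $(\bar c_i,\bar c_j)\models\mathfrak r^2_{\strok A}$ forces the $a_i$'s to be pairwise distinct. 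The proof of Lemma~\ref{L1} shows that $\phi(\bar M,\bar b)\cap p(\bar M)\subseteq\cl_{\mathfrak p,A}(\bar e)$ for a fixed finite Morley tuple $\bar e$ in $\mathfrak p$ over $A$, so infinitely many distinct $a_i$ live inside a finite-dimensional pregeometry. Extract a maximal Morley subsequence $a_{i_0},\ldots,a_{i_{N-1}}$ of the $a_i$'s in $\mathfrak p$ over $A$; each remaining $a_j$ is then $\cl_{\mathfrak p,A}$-dependent on this subsequence, while $a_{i_k}\in\cl_{\mathfrak p,A}(\bar c_{i_k})$ and the $\bar c_i$ form a Morley sequence in $\mathfrak r$ over $A$. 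A further appeal to Lemma~\ref{L1}, applied to $\psi$ with parameters drawn from the Morley subsequence $\bar c_{i_0},\ldots,\bar c_{i_{N-1}}$, should produce distinct indices $j\neq k$ and a common realization in $p(\bar M)$ of $\psi(\cdot,\bar c_j)\wedge\psi(\cdot,\bar c_k)$, contradicting condition~(B) for $\psi$. The hardest step is precisely this last one: converting the $\mathfrak r$-Morley-independence of the $\bar c_i$'s into disjointness of their $\mathfrak p$-closures inside $p(\bar M)$ requires a careful dimension and closure argument in the pregeometry.
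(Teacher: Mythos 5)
Your decomposition into cases is not the paper's route, and the main case has a gap you yourself flag. The paper avoids all of this with a single trick: set $\phi(x,\bar b,\bar c):=\varphi(x,\bar b)\vee\psi(x,\bar c)$, where $\varphi\in\tp(a/A\bar b)\setminus\mathfrak p$ and $\psi\in\tp(a/A\bar c)\setminus\mathfrak p$ are your two witnessing formulas. Since $\mathfrak p$ is complete, the disjunction is still outside $\mathfrak p_{\strok A\bar b\bar c}$, so you can apply Lemma~\ref{L1} once, with $\mathfrak q':=\mathfrak q\otimes\mathfrak r$ and $\bar b':=\bar b\bar c$. Generic stability forces option (B) for $\phi$. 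Now take a Morley pair $\bar b\bar c,\,\bar b_1\bar c_1$ in $\mathfrak q\otimes\mathfrak r$ over $A$ and pick $a_1$ with $a_1\bar c_1\equiv a\bar c\,(A\bar b)$; then $\models\varphi(a_1,\bar b)$ gives $\models\phi(a_1,\bar b,\bar c)$ and $\models\psi(a_1,\bar c_1)$ gives $\models\phi(a_1,\bar b_1,\bar c_1)$, so $a_1$ realizes $p$ together with $\phi(x,\bar y_1)\wedge\phi(x,\bar y_2)$ on a $\mathfrak q'^2$-pair, contradicting (B). Notice that the disjunction is what makes one appeal to Lemma~\ref{L1} cover both "directions" at once.

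Your cases 1 and 2 are correct as far as they go, but they do not cover the generic situation; everything rides on case 3, and that is precisely where the argument breaks. The step you cannot make is the conversion of $\mathfrak r$-Morley independence of the $\bar c_i$'s into disjointness, or even genericity, of the closures $\cl_{\mathfrak p,A}(\bar c_i)$ inside $p(\bar M)$. In the proof of Lemma~\ref{L1} this conversion is free, because there the tuples $\bar c_i$ are themselves a Morley sequence in a \emph{power of} $\mathfrak p$, hence $\cl_{\mathfrak p,A}$-independent, and $d_i\in\cl_{\mathfrak p,A}(\bar c_i)$ then yields a Morley sequence $(d_i)$ in $\mathfrak p$. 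Here the $\bar c_i$'s are Morley in $\mathfrak r$, which says nothing about $\cl_{\mathfrak p,A}$; it is perfectly possible for all the closures $\cl_{\mathfrak p,A}(\bar c_i)$ to share a common nontrivial part, so that the $a_i$'s, though pairwise distinct, all lie in $\cl_{\mathfrak p,A}(a_0)$ and your "maximal Morley subsequence" has length $N=1$. Then no appeal to Lemma~\ref{L1} with parameters among the $\bar c_{i_k}$ can produce the common realization of $\psi(\cdot,\bar c_j)\wedge\psi(\cdot,\bar c_k)$ you hope for, because you have no way to produce a Morley sequence of realizations of $\phi(\cdot,\bar b)\cap p$ longer than $n_\phi$. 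The "careful dimension and closure argument" you are hoping for is not a gap in exposition but a missing idea, and the paper's disjunction trick is exactly what replaces it.
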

\begin{proof}
Suppose that neither of them holds and choose $\varphi(x,\bar b)\in\tp(a/A\bar
b)\smallsetminus \mathfrak p_{\strok A\bar b}$ and $\psi(x,\bar
c)\in\tp(a/A\bar c)\smallsetminus \mathfrak p_{\strok A\bar c}$. Let
$\phi(x,\bar b,\bar c)$ be $\varphi(x,\bar b)\vee\psi(x,\bar c)$.
Let $\mathfrak q'= \mathfrak{q}\otimes\mathfrak{r}$, $\bar b'=\bar b\,\bar c$. Then we have:

\smallskip - \ $\mathfrak{p}$ and $\mathfrak{q'}$ are $A$-invariant and
$\mathfrak{p}$ is  regular and  symmetric;

\smallskip - \   $a\models\mathfrak p_{\strok A}$ and $\bar b'\models \mathfrak{q'}_{\strok
A}$;

\smallskip - \ $a$ does not
realize $\mathfrak{p}_{\strok A\bar b'}$ and    $\phi(x,\bar
b')\in\tp(a/A\bar b')\smallsetminus\mathfrak{p}_{\strok A\bar b'}$.

\smallskip\noindent
Therefore  $\mathfrak{p},\mathfrak{q'}, a,\bar b'$ and
$\phi(x,\bar b')$ satisfy assumptions of Lemma \ref{L1} in place
of $\mathfrak p,\mathfrak q, a, \bar b$.  Since $\mathfrak p$ is
generically stable option (B) holds:
\setcounter{equation}{0}\begin{equation}\label{EB} \mathfrak
p_{\strok A} (x)\cup(\mathfrak{q'})^2_{\strok A}(\bar y_1,\bar
y_2)\cup \{\phi(x,\bar y_1)\wedge \phi(x,\bar y_2)\} \ \textmd{ is
inconsistent.}
\end{equation}
 Let
$\bar b\bar c, \bar b_1\bar c_1$ be a Morley sequence in
$\mathfrak{q}\otimes\mathfrak{r}$ over $A$ and let $a_1$ be such
that $a_1\,\bar c_1\equiv a\,\bar c\,(A\,\bar b)$. Then $\models
\varphi(a_1,\bar b)$ implies $\models \phi(a_1,\bar b,\bar c)$\,, and
$\models \psi(a_1,\bar c_1)$ implies $\models \phi(a_1,\bar b_1,\bar
c_1)$. Summing up, we have:
\begin{center}
$a_1\models \mathfrak{p}_{\strok A}(x)\,, \ \
(\mathfrak{q'}^2)_{\strok A}(\bar b\,\bar c,\bar b_1\,\bar c_1) \
\ \textmd{and}\ \   \models \phi(a_1,\bar b,\bar c)\wedge
\phi(a_1,\bar b_1,\bar c_1) \  $\end{center} which contradicts
(\ref{EB}). This proves the proposition.
\end{proof}

\begin{cor}\label{Cwt1}
Suppose that $\mathfrak{p}$,  $\mathfrak{q}$ and $\mathfrak{r}$
are  invariant   and that $\mathfrak{p}$ is regular and
generically stable. Then \  $\mathfrak p\perp\mathfrak
q\otimes\mathfrak r$ \ if and only if \   $\mathfrak
p\perp\mathfrak q$ and $\mathfrak p\perp\mathfrak r$.
\end{cor}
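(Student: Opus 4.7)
For the forward direction, I would deduce each of $\mathfrak{p}\perp\mathfrak{q}$ and $\mathfrak{p}\perp\mathfrak{r}$ by extending realizations to realizations of $\mathfrak{q}\otimes\mathfrak{r}$ and projecting. Given $a\models\mathfrak{p}$ and $\bar b\models\mathfrak{q}$, realize $\bar c\models\mathfrak{r}_{\strok\bar M a\bar b}$; by restriction $\bar c\models\mathfrak{r}_{\strok\bar M\bar b}$, so $(\bar b,\bar c)\models\mathfrak{q}\otimes\mathfrak{r}$, and the hypothesis pins down $\tp(a,\bar b,\bar c/\bar M)$, whose projection onto $(a,\bar b)$ is $\mathfrak{p}(x)\cup\mathfrak{q}(\bar y)$. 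For $\mathfrak{p}\perp\mathfrak{r}$, given $a\models\mathfrak{p}$ and $\bar c\models\mathfrak{r}$, a standard automorphism argument produces $\bar b$ with $\tp(\bar b,\bar c/\bar M)=\mathfrak{q}\otimes\mathfrak{r}$: any specific realization $(\bar b',\bar c')$ of $\mathfrak{q}\otimes\mathfrak{r}$ has $\bar c'\equiv\bar c$ over $\bar M$, so an $\bar M$-automorphism sending $\bar c'$ to $\bar c$ moves $\bar b'$ to the desired $\bar b$; then project onto $(a,\bar c)$.

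For the reverse direction, which is the substantive one, I fix small $A$ over which all three types are invariant and $\mathfrak{p}$ is regular. Since non-orthogonality is witnessed by a formula with finitely many parameters in $\bar M$, it suffices to show $\mathfrak{p}_{\strok B}\wor(\mathfrak{q}\otimes\mathfrak{r})_{\strok B}$ for every small $B\supseteq A$. Fix such $B$ and realize $a\models\mathfrak{p}_{\strok B}$ together with $(\bar b,\bar c)\models(\mathfrak{q}\otimes\mathfrak{r})_{\strok B}$ inside $\bar M$; unpacking the product, $\bar b\models\mathfrak{q}_{\strok B}$ and $\bar c\models\mathfrak{r}_{\strok B\bar b}$.

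The first key step is that $\mathfrak{p}\perp\mathfrak{q}$ automatically supplies $a\models\mathfrak{p}_{\strok B\bar b}$. Restricting $\mathfrak{p}(x)\cup\mathfrak{q}(\bar y)$ to $B$ gives $\mathfrak{p}_{\strok B}\wor\mathfrak{q}_{\strok B}$, so the complete $B$-type of $(a,\bar b)$ coincides with that of $(a^{\ast},\bar b)$ for any global realization $a^{\ast}\models\mathfrak{p}$; since $\tp(a^{\ast}/B\bar b)=\mathfrak{p}_{\strok B\bar b}$ (the canonical $A$-invariant extension), the same holds for $a$. This is exactly the Case 1 conclusion of Proposition \ref{Pwt1}, available here for free from $\mathfrak{p}\perp\mathfrak{q}$. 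The second key step then applies $\mathfrak{p}\perp\mathfrak{r}$ restricted to the small set $B\bar b\subseteq\bar M$, yielding $\mathfrak{p}_{\strok B\bar b}\wor\mathfrak{r}_{\strok B\bar b}$; combined with $a\models\mathfrak{p}_{\strok B\bar b}$ and $\bar c\models\mathfrak{r}_{\strok B\bar b}$, this determines $\tp(a,\bar c/B\bar b)$ and in particular $\tp(a/B\bar b\bar c)=\mathfrak{p}_{\strok B\bar b\bar c}$. Together with $\tp(\bar b,\bar c/B)=(\mathfrak{q}\otimes\mathfrak{r})_{\strok B}$ the joint type $\tp(a,\bar b,\bar c/B)$ is pinned down as required.

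The main obstacle is the first key step: identifying the realized type $\tp(a/B\bar b)$ with the canonical invariant extension $\mathfrak{p}_{\strok B\bar b}$. A priori $a$ only realizes $\mathfrak{p}_{\strok B}$ and could have many extensions over $B\bar b$; the $\mathfrak{p}\perp\mathfrak{q}$ hypothesis is what closes this gap, by forcing every realization of the complete joint $B$-type to match a global $\mathfrak{p}$-realization on $B\bar b$. Once this is settled, the remainder is a bookkeeping chain of restrictions from $B$ through $B\bar b$ to $B\bar b\bar c$.
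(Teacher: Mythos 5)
Your forward direction is fine, and so is the reduction to showing $\mathfrak p_{\strok B}\wor (\mathfrak q\otimes\mathfrak r)_{\strok B}$ for every small $B\supseteq A$. The problem lies in both ``key steps'' of the reverse direction, which rest on the same unjustified principle: that global weak orthogonality restricts to small parameter sets. You assert that $\mathfrak p\perp\mathfrak q$ ``restricted to $B$'' gives $\mathfrak p_{\strok B}\wor\mathfrak q_{\strok B}$, and likewise that $\mathfrak p\perp\mathfrak r$ restricted to $B\bar b$ gives $\mathfrak p_{\strok B\bar b}\wor\mathfrak r_{\strok B\bar b}$. But restriction only works in the wrong direction: restricting the complete global type $\mathfrak p(x)\cup\mathfrak q(\bar y)$ to $B$ yields a complete type over $B$ that \emph{contains} $\mathfrak p_{\strok B}(x)\cup\mathfrak q_{\strok B}(\bar y)$, and there may well be other completions of $\mathfrak p_{\strok B}\cup\mathfrak q_{\strok B}$, so completeness of the latter is not automatic. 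The preliminaries of the paper explicitly warn that $\mathfrak p\perp \mathfrak q$ together with $\mathfrak p_{\strok A}\nwor \mathfrak q_{\strok A}$ can occur in an unstable theory, so the step is genuinely false in general. The fact that, for a generically stable regular $\mathfrak p$, orthogonality \emph{does} descend to every $C\supseteq A$ is precisely Proposition~\ref{Pwor=or}, a nontrivial result proved two sections later whose proof itself relies essentially on Proposition~\ref{Pwt1}; it cannot be cited as bookkeeping here.

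If one grants Proposition~\ref{Pwor=or}, your chain of restrictions through $B\subseteq B\bar b\subseteq B\bar b\bar c$ does close up cleanly (and so does your verification that $\bar c\models\mathfrak r_{\strok B\bar b}$). But as it stands, the corollary is placed immediately after Proposition~\ref{Pwt1} and is meant to follow from it, and your argument never invokes Proposition~\ref{Pwt1} at all; the entire weight-one content has vanished. You need either to invoke (and be honest that you are using) Proposition~\ref{Pwor=or}, or to argue by contradiction directly from Proposition~\ref{Pwt1}: from $a\models\mathfrak p$, $\bar b\bar c\models\mathfrak q\otimes\mathfrak r$ with $a\nmodels\mathfrak p_{\strok\bar M\bar b\bar c}$, deduce via $\mathfrak p\perp\mathfrak q$ and $\mathfrak p\perp\mathfrak r$ that $a\models\mathfrak p_{\strok\bar M\bar b}$ and $a\models\mathfrak p_{\strok\bar M\bar c}$, and then exploit regularity and Proposition~\ref{Pwt1} over the enlarged base to reach a contradiction.
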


\begin{lem}\label{L16}
Suppose that $\mathfrak{p,q,r}$ are $A$-invariant, $\mathfrak p$ and $\mathfrak q$ are  regular over $A$,   and
that $\mathfrak{p}$ is   generically stable. Further, suppose that
$a,b,\bar c$ are realizations of $p,q,r $ (where $x=\mathfrak
x_{\strok A}$) respectively  such that   $b\nmodels
\mathfrak{q}_{\strok A\bar c}$  and $a\nmodels
\mathfrak{p}_{\strok Ab}$. Then:

\smallskip (i)  \ $a\nmodels \mathfrak{p}_{\strok A\bar c}$.

\smallskip (ii)   For all $\phi(y,\bar c)\in \tp(b/A\bar c)$
witnessing $b\nmodels \mathfrak{q}_{\strok A\bar c}$ and
$\theta(b,x)\in\tp(a/Ab)$ witnessing  $a\nmodels
\mathfrak{p}_{\strok Ab}$  there exist
$\varphi_{\mathfrak{q}}(y)\in\mathfrak{q}_{\strok A}$ and
$\varphi_{\mathfrak{p}}(x)\in \mathfrak{p}_{\strok A}$ such that
$$ \exists y( \varphi_{\mathfrak{p}}(x) \wedge
\varphi_{\mathfrak{q}}(y) \wedge \phi(y,\bar c) \wedge
\theta(y,x))\notin \mathfrak{p}_{\strok A\bar c}(x)\ .$$
\end{lem}
\begin{proof}
(i)  Suppose on the contrary that  $a\models \mathfrak{p}_{\strok A\bar c}$.
Let   $b'$ realize $\mathfrak{q}_{\strok A\bar c}$ and let $a'$ be a realization of $p$ such that
$ab\equiv a'b'\,(A)$.
We {\em claim} that $a'\models \mathfrak{p}_{\strok A\bar c}$ holds. Otherwise we have
$$   a'\nmodels \mathfrak{p}_{\strok A\bar c} \ \ a'\nmodels \mathfrak{p}_{\strok Ab'} \ \ \textmd{and} \ \ \bar c\,b'\models \mathfrak{r}\otimes\mathfrak{q}_{\strok A}$$
which contradicts Proposition \ref{Pwt1}. The claim implies
$a'\equiv a\,(A\bar c)$. Now choose $b''$ such that   $a'b''
\equiv ab \,(A\bar c)$. Then $b'\models \mathfrak{q}_{\strok A\bar
c}$ and $b''\nmodels\mathfrak{q}_{\strok A\bar c}$, by regularity of
$(\mathfrak{q},A\,\bar c)$, imply $b'\models \mathfrak{q}_{\strok
Ab''\bar c}$. Hence $(b'', b')$ is a Morley sequence in
$\mathfrak{q}$ over $A$. By Proposition \ref{Pwt1}   at least one
of $a'\models\mathfrak{p}_{\strok Ab'}$ and
$a'\models\mathfrak{p}_{\strok Ab''}$ holds. The first is not
possible because $a'b' \equiv ab \,(A\bar c)$, and the second
because $a'b'' \equiv ab \,(A\bar c)$. A contradiction.

\smallskip (ii) By interpreting  assumptions of the lemma  and what we have just proved in part
(i), we have:
$$    \mathfrak{p}_{\strok A}(x) \cup   \mathfrak{q}_{\strok
A}(y)\cup \{\phi(y,\bar c)\wedge\theta(y,x)\}\cup
\mathfrak{p}_{\strok A\bar c}(x) \ \ \textmd{is inconsistent}$$ By
compactness  there are
$\varphi_{\mathfrak{p}}(x)\in\mathfrak{p}_{\strok A}$,
$\varphi_{\mathfrak{q}}(y)\in \mathfrak{q}_{\strok A}$ and
$\sigma(x,\bar c) \in \mathfrak p_{\strok A\,\bar c}$ such that:
$$  \varphi_{\mathfrak{p}}(x) \wedge \varphi_{\mathfrak{q}}(y)\wedge \phi(y,\bar c)
\wedge \theta(y,x) \vdash \neg\sigma(x,\bar c) $$ Therefore \  $
\exists y( \varphi_{\mathfrak{p}}(x) \wedge
\varphi_{\mathfrak{q}}(y) \wedge \phi(y,\bar c) \wedge
\theta(y,x))\vdash \neg\sigma(x,\bar c)  \,  $  and
$$ \exists y( \varphi_{\mathfrak{p}}(x) \wedge
\varphi_{\mathfrak{q}}(y) \wedge \phi(y,\bar c) \wedge
\theta(y,x))\notin \mathfrak{p}_{\strok A\bar c}(x)\ .$$ This proves
(ii).
\end{proof}

\begin{prop}\label{Porthgstab}
Generic stability is preserved under non-orthogonality
of symmetric, regular types.
\end{prop}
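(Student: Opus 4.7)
Assume for contradiction that $\mathfrak{p}$ is generically stable but $\mathfrak{q}$ is not, and that $\mathfrak{p}\nor\mathfrak{q}$. Fix a small set $A$ over which both types are invariant and regular, and use the non-orthogonality to obtain $a\models\mathfrak{p}_{\strok A}$, $b\models\mathfrak{q}_{\strok A}$, and a formula $\theta(y,x)$ over $A$ (enlarging $A$ to absorb its parameters) such that $\models\theta(b,a)$ and $\neg\theta(b,x)\in\mathfrak{p}_{\strok Ab}$; thus $\theta$ witnesses $a\nmodels\mathfrak{p}_{\strok Ab}$.

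Because $\mathfrak{q}$ is symmetric, Morley sequences in $\mathfrak{q}$ over $A$ are totally $A$-indiscernible, so the assumed failure of generic stability yields a Morley sequence $(b_i:i<\omega+\omega)$ in $\mathfrak{q}$ over $A$ and a formula $\psi(y,\bar c)$ (with $\bar c\subset\bar M$) such that, after reordering via total indiscernibility, $\models\psi(b_i,\bar c)\iff i<\omega$. The completion $\mathfrak{q}_{\strok A\bar c}$ contains exactly one of $\psi,\neg\psi$; by swapping the two halves we may assume $\neg\psi(y,\bar c)\in\mathfrak{q}_{\strok A\bar c}$, so that $\psi(y,\bar c)$ witnesses $b_0\nmodels\mathfrak{q}_{\strok A\bar c}$. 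Transporting the non-orthogonality witness by an $A$-automorphism, we may take the witnessing pair to be $(a,b_0)$. Let $\mathfrak{r}$ be any global $A$-invariant extension of $\tp(\bar c/A)$. Lemma \ref{L16} now applies to $\mathfrak{p},\mathfrak{q},\mathfrak{r}$ and the realizations $a,b_0,\bar c$: part (ii), with $\phi:=\psi$ and with $\theta$ as above, supplies $\varphi_{\mathfrak p}\in\mathfrak p_{\strok A}$ and $\varphi_{\mathfrak q}\in\mathfrak q_{\strok A}$ such that
\[ \chi(x,\bar c):=\exists y\bigl(\varphi_{\mathfrak p}(x)\wedge\varphi_{\mathfrak q}(y)\wedge\psi(y,\bar c)\wedge\theta(y,x)\bigr)\notin\mathfrak p_{\strok A\bar c}(x). \]

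To finish I would derive a contradiction with the generic stability of $\mathfrak{p}$ by producing a $\mathfrak{p}$-Morley sequence on which $\chi(x,\bar c)$ exhibits infinite alternation. The infinitely many \emph{good} indices $i<\omega$ provide many realizations $a'\models\mathfrak p_{\strok A}\cup\{\chi(x,\bar c)\}$ (witnessed by $y:=b_i$ after an $A$-automorphism sending $b_0$ to $b_i$), whereas on the \emph{bad} side the weight-one property of Proposition \ref{Pwt1} severely restricts how such $a'$ can be $\cl_{\mathfrak p,A}$-related to each other over $\bar c$. Using the pregeometry $\cl_{\mathfrak p,A}$ (available by symmetry of $\mathfrak{p}$), one extracts from these data a genuine Morley sequence $(a_j)$ in $\mathfrak p$ over $A$ on which $\chi(x,\bar c)$ alternates infinitely. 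The principal obstacle is exactly this last step: coordinating the $(b_i)$-indiscernibility with $\cl_{\mathfrak p,A}$-independence on the $\mathfrak p$-side so that the alternation of $\psi$ on $(b_i)$ transfers to an alternation of $\chi$ on $(a_j)$; compactness together with the weight-one constraint from Proposition \ref{Pwt1} is what drives this construction. Once such a sequence is in hand, the finite-or-cofinite condition defining generic stability of $\mathfrak{p}$ is violated, and the desired contradiction is reached.
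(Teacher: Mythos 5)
Your setup is correct and matches the paper's opening moves: fix a witness $(a,b,\theta)$ for $\mathfrak p\nor\mathfrak q$, extract from the failure of generic stability of $\mathfrak q$ a Morley sequence $(b_i\,|\,i\in\omega+\omega)$ and a formula $\psi(y,\bar c)$ alternating on it, normalize so $\neg\psi(y,\bar c)\in\mathfrak q$, and apply Lemma~\ref{L16}(ii) to obtain the formula $\chi(x,\bar c)\notin\mathfrak p_{\strok A\bar c}$. However, you then explicitly stop and declare that ``the principal obstacle is exactly this last step,'' namely producing a Morley sequence in $\mathfrak p$ on which $\chi(x,\bar c)$ alternates. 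That is indeed the crux, and you have not supplied it, so the argument as written has a genuine gap.

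The paper closes this gap by a clean device that your sketch gestures at but does not actually construct. One chooses, \emph{for every} $i\in\omega+\omega$ (not just the ``good'' indices), an element $a_i$ with $a_ib_i\equiv ab$ over the base. Since $\models\theta(b_i,a_i)$ forces $a_i\nmodels\mathfrak p_{\strok A b_i}$, and $\bar b_{<i},b_i$ realize $\mathfrak q^{<i}\otimes\mathfrak q$ over $A$, Proposition~\ref{Pwt1} (weight one) yields $a_i\models\mathfrak p_{\strok A\bar b_{<i}}$. Each $a_j\in\cl_{\mathfrak p,A}(b_j)$, so $\bar a_{<i}\subseteq\cl_{\mathfrak p,A}(\bar b_{<i})$, and regularity over $A\bar b_{<i}$ then upgrades this to $a_i\models\mathfrak p_{\strok A\bar b_{<i}\bar a_{<i}}$ — that is, $(a_i)$ \emph{is} a Morley sequence in $\mathfrak p$ over $A$. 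The ``good'' side $\models\chi(a_n,\bar c)$ for $n<\omega$ comes for free via the witness $b_n$. The ``bad'' side requires one more preparation you omit: the tail $(b_{\omega+n})$ must first be replaced by a Morley continuation of $(b_n)_{n<\omega}$ over $A\bar b_{<\omega}\bar c$, so that $b_{\omega+n}\models\mathfrak q_{\strok A\bar b_{<\omega+n}\bar c}$; Proposition~\ref{Pwt1} applied to $\bar c,b_{\omega+n}$ then gives $a_{\omega+n}\models\mathfrak p_{\strok A\bar c}$, hence $\models\neg\chi(a_{\omega+n},\bar c)$ since $\chi(x,\bar c)\notin\mathfrak p_{\strok A\bar c}$. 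This completes the alternation and the contradiction. So your outline has the right ingredients (Lemma~\ref{L16}(ii), Proposition~\ref{Pwt1}, the pregeometry), but the decisive move — defining $a_i$ for \emph{all} $i$ including the cofinite tail, proving $(a_i)$ is a Morley sequence by weight one plus regularity, and arranging the $\mathfrak q$-Morley tail over $\bar c$ so that negation transfers — is exactly what remains to be done.
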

\begin{proof}
Suppose that $\mathfrak{p}$ and  $\mathfrak{q}$ are   both
regular, non-orthogonal    and that $\mathfrak{p}$ is generically
stable. Choose a small model $M$ such that both $\mathfrak{p}$
and $\mathfrak{q}$ are regular over $M$ and $p\nwor q$ holds for their
corresponding restrictions. Let  $a\models p$, $b\models q$ and
$\theta(y,x)\in\tp(b,a/M)$   be such that
$\theta(b,x)\notin\mathfrak{p}$.

Suppose for a contradiction that $\mathfrak{q}$ is not generically
stable. Then for a suitably chosen larger $M$, $\phi(y,\bar z)$
over $M$,  and a  Morley sequence $\{ b_i\,|\,i\in\omega+\omega\}$
in $\mathfrak{q}$ over $M$ there exists $\bar c$ realizing
\begin{center} $ \{\phi(b_n,\bar z)\,|\,i\in\omega\}\cup
\{\neg\phi (b_{\omega+n},\bar z)\,|\,i\in\omega\} $
\end{center}
Since $\mathfrak{q}$ is symmetric, after possibly replacing the
first and the second $\omega$-part of the sequence,  we may assume
$\neg\phi(x,\bar c)\in \mathfrak{q}$. Also, after replacing the
second $\omega$-part by a Morley sequence in $\mathfrak{q}$ over
$M\,\bar b_{<\omega}\,\bar c$ we may assume that each
$b_{\omega+n}$ realizes $\mathfrak{q}_{\strok M\bar
b_{<\omega+n}\bar c}$.

\smallskip
For each $i\in\omega+\omega$ choose $a_i$ such that $a_ib_i\equiv
ab\,(M)$. Then $\theta(b_i,x)\notin\mathfrak{p}$ witnesses that
$a_i\nmodels \mathfrak p_{\strok Mb_i}$. We claim that:
\begin{equation}\label{e171}   a_i\models \mathfrak{p}_{\strok M\bar b_{<i}}  \   \textmd{holds for all
$i\in\omega+\omega$.}
\end{equation}
To prove it note that $\bar b_{<i}b_i\models
(\mathfrak{p}^{<i}\otimes \mathfrak{p})_{\strok M}$ so, by
Proposition \ref{Pwt1},   at least one of
$a_i\models\mathfrak{p}_{\strok M\bar b_{<i}}$ and $a_i\models
\mathfrak{p}_{\strok Mb_i}$ holds.  Since $\models\theta(b_i,a_i)$
implies $a_i\nmodels\mathfrak{p}_{\strok Mb_i}$ we conclude that
$a_i\models \mathfrak{p}_{\strok M\bar b_{<i}}$ holds, proving the
claim. Combining (\ref{e171}) with $\bar
a_{<i}\subseteq\cl_{\mathfrak{p},M}(\bar b_{<i})$ and the
regularity of $(\mathfrak{p}, M\,\bar b_{<i}) $ we derive:
\begin{equation}\label{e172} a_i\models \mathfrak{p}_{\strok M\bar b_{<i}\bar a_{<i}}  \ \textmd{ holds for all $i\in\omega+\omega$}..
\end{equation}
In particular, $(a_i\,|\,i\in\omega+\omega)$ is a Morley sequence
in $\mathfrak{p}$ over $M$.

 Continuing the proof of the
proposition we first note that $a_0,b_0$ and $\bar c$ satisfy
assumptions of the Lemma \ref{L16}: let $\mathfrak{r}$ be any
global coheir of $\tp(\bar c/M)$.  Then $b_0\nmodels
\mathfrak{q}_{\strok M\bar c}$ (witnessed by $\models \phi(b_0,\bar
c)$)  and $a_0\nmodels \mathfrak{p}_{\strok Mb}$ (witnessed by
$\models\theta(b_0,a_0)$). So we apply Lemma \ref{L16}(ii) and
consider the formula
$$ \exists y(\phi(y,\bar c) \wedge \varphi_{\mathfrak{q}}(y) \wedge \varphi_{\mathfrak{p}}(x)\wedge
\theta(y,x))\notin \mathfrak{p} \ .$$ Denote it  by $\psi(x,\bar
c)$.   Then $\models\psi(a_n,\bar c)$ holds for all $n\in \omega$:
the existential quantifier is witnessed by $b_n$.  On the other
hand, $a_{\omega+n}\models \mathfrak{p}_{\strok M\bar c}$ implies
that $\models\neg\psi(a_{\omega+n},\bar c)$ holds for all
$n\in\omega$. Therefore, $\bar c$ realizes
\begin{center} $
\{\psi(a_n,\bar z)\,|\,n\in\omega\}\cup \{\neg\psi
(a_{\omega+n},\bar z)\,|\,n\in\omega\} $
\end{center}
and $\mathfrak{p}$ is not generically stable. A contradiction.
\end{proof}

\noindent{\em Proof of Theorem  \ref{Tgenstabnor}.} It remains to
prove that $\nor$  is an equivalence relation on the set of all
generically stable regular types.   Only transitivity needs verification, so
assume that $\mathfrak{p}\nor \mathfrak q$ and $\mathfrak  q\nor\mathfrak r$ are  regular and generically stable. Let
$a,b, c$   realize  $\mathfrak p,\mathfrak q,\mathfrak r $  respectively be  such that   $b\nmodels
\mathfrak{q}_{\strok \bar M  c}$  and $a\nmodels \mathfrak{p}_{\strok
\bar M b}$. Then, by Lemma \ref{L16}(i),  $a\nmodels \mathfrak{p}_{\strok \bar M c}$ holds, so $\mathfrak p\nor \mathfrak r$ and $\nor$ is transitive. \qed

\section{Strong regularity}\label{Sstrong}

In this section we study non-orthogonality of an invariant type
and a strongly regular type. We will show that the dependence of
their realizations is witnessed by semi-isolation.

\begin{lem}\label{Lsemi} Suppose that $(\mathfrak{p},\phi_{\mathfrak p})$ is $A$-invariant, definable and strongly regular and that $a,b$ are realizations of $p=\mathfrak p_{\strok A}$.

\smallskip (i) If $\tp(a/Ab)$ is finitely satisfiable in
$\mathcal{C}_{\mathfrak{p}}=\phi_{\mathfrak{p}}(\bar M) \smallsetminus \mathfrak{p}_{\strok
A}(\bar M)$  then $(a,b)$ is a Morley sequence in
$\mathfrak{p}$ over $A$.

\smallskip (ii) If $(a_0,...,a_n)$ is a  $\mathcal{C}_{\mathfrak{p}}$-sequence of realizations of $p$, then $(a_n,...,a_0)$ is a Morley sequence in $\mathfrak p$ over $A$.

\smallskip (iii)  If $p$ is non-isolated then: \ \  $b\nmodels \mathfrak{p}_{\strok Aa}$ \ iff \ $a\in\Sem_A(b)$.
\end{lem}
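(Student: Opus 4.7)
The plan is to make (i) the engine: prove it directly from definability of $\mathfrak{p}$ and strong regularity, then bootstrap (ii) by induction using the same trick, and derive (iii) from (i) combined with Fact \ref{Fsemifs}.

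For (i) I would assume $\tp(a/Ab)$ is finitely satisfiable in $\mathcal{C}_{\mathfrak{p}}$ and argue by contradiction: pick $\psi(y, a, \bar e) \in \mathfrak{p}_{\strok Aa}$ with $\bar e \in A$ such that $\models \neg \psi(b, a, \bar e)$. Definability of $\mathfrak{p}$ yields $\models d_{\mathfrak{p}}\psi(a, \bar e)$, so the $Ab$-formula
$$\chi(x) := d_{\mathfrak{p}}\psi(x, \bar e) \wedge \neg \psi(b, x, \bar e)$$
belongs to $\tp(a/Ab)$. Finite satisfiability extracts $c \in \mathcal{C}_{\mathfrak{p}}$ with $\models \chi(c)$; in particular $\psi(y, c, \bar e) \in \mathfrak{p}_{\strok Ac}$ while $\not\models \psi(b, c, \bar e)$. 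But $c$ satisfies $\phi_{\mathfrak{p}}$ and $c \not\models p$, so Proposition \ref{PequivSR} forces $p \vdash \mathfrak{p}_{\strok Ac}$; since $b \models p$, we obtain $\models \psi(b, c, \bar e)$, a contradiction. Thus $b \models \mathfrak{p}_{\strok Aa}$, i.e.\ $(a,b)$ is Morley.

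For (ii) I induct on $n$, the base $n = 0$ being immediate. In the step, with $(a_n, \ldots, a_0)$ Morley over $A$ by IH, I must check $a_j \models \mathfrak{p}_{\strok A\,a_{n+1}\,\bar a''}$ for every $j \leq n$, where $\bar a'' := (a_n, \ldots, a_{j+1})$. Arguing as in (i) but with $\psi$ now carrying the extra parameters $\bar a''$, the putative failure produces
$$\chi(y) := d_{\mathfrak{p}}\psi(y, \bar a'', \bar e) \wedge \neg \psi(a_j, y, \bar a'', \bar e)$$
in $\tp(a_{n+1}/A a_{\leq n})$; finite satisfiability yields $c \in \mathcal{C}_{\mathfrak{p}}$ with $\models \chi(c)$. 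Applying strong regularity at $B = A \bar a''$ (legal since $c \not\models p$ rules out $c \models \mathfrak{p}_{\strok B}$) gives $\mathfrak{p}_{\strok B} \vdash \mathfrak{p}_{\strok B c}$; the IH supplies $a_j \models \mathfrak{p}_{\strok B}$, hence $a_j \models \mathfrak{p}_{\strok B c}$ and thus $\models \psi(a_j, c, \bar a'', \bar e)$, contradicting the second conjunct of $\chi(c)$.

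For (iii), the direction $b \not\models \mathfrak{p}_{\strok Aa} \Rightarrow a \in \Sem_A(b)$ is just the contrapositive of (i) mediated by Fact \ref{Fsemifs}(2). For the converse, I assume $a \in \Sem_A(b)$ and, for contradiction, $b \models \mathfrak{p}_{\strok Aa}$. Since $p$ is non-isolated, Fact \ref{Ffs} makes $p$ a $\mathcal{C}_{\mathfrak{p}}$-type, and Fact \ref{Fext} supplies $a'$ realizing a $\mathcal{C}_{\mathfrak{p}}$-type extension of $p$ over $Ab$; by (i), $b \models \mathfrak{p}_{\strok Aa'}$. Combining $b \models \mathfrak{p}_{\strok Aa}$ with $b \models \mathfrak{p}_{\strok Aa'}$ and using the $A$-invariance of $\mathfrak{p}$ together with $a \equiv a' \ (A)$, one checks $\models \psi(b, a, \bar e) \Leftrightarrow \psi(b, a', \bar e)$ for every formula $\psi(y, x, \bar e)$ over $A$, hence $a \equiv a' \ (Ab)$. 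Thus $\tp(a/Ab) = \tp(a'/Ab)$ is a $\mathcal{C}_{\mathfrak{p}}$-type, contradicting Fact \ref{Fsemifs}(2). The main technical hurdle is the bookkeeping in (ii)---keeping track of which parameters enter $\psi$, ensuring the finite-satisfiability witness $c$ lies in $\mathcal{C}_{\mathfrak{p}}$, and invoking strong regularity at the enlarged base $B = A \bar a''$ so as to exploit the inductive Morley-sequence assumption. The unifying idea is that finite satisfiability in $\mathcal{C}_{\mathfrak{p}}$ transfers any putative failure of $\mathfrak{p}$-realization to a surrogate $c \not\models p$, where strong regularity collapses the dichotomy and forces $p \vdash \mathfrak{p}_{\strok Ac}$.
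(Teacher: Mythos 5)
Your proof is correct and follows essentially the same route as the paper's: in (i) you extract a witnessing formula, conjoin it with its $\mathfrak p$-definition to get a formula of $\tp(a/Ab)$ that cannot be satisfied by any $c\in\mathcal{C}_{\mathfrak p}$ because strong regularity (equivalently WOR, via Proposition~\ref{PequivSR}/Lemma~\ref{Lworinv}) forces $p\vdash\mathfrak p_{\strok Ac}$; for (ii) you spell out the induction the paper leaves implicit, running the same argument with $\bar a''$ absorbed into the base; and your (iii) matches the paper's, with Fact~\ref{Fsemifs}(2) doing the translation between semi-isolation and finite satisfiability in both directions.
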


\begin{proof} (i) Assuming   that $(a,b)$ is not a Morley sequence we will show that $\tp(a/Ab)$ is not a $\mathcal{C}_{\mathfrak{p}}$-type. Let $\varphi(x,y)\in\tp(a,b/A)$ witness   $b\nmodels \mathfrak p_{\strok A\,a}$; then $\models \varphi(a,b)\wedge d_{\mathfrak p}t\varphi(a,t)$ holds.
We claim that $\varphi(x,b)\wedge d_{\mathfrak p}t\varphi(x,t)$ is not satisfied in $\mathcal{C}_{\mathfrak{p}}$: otherwise, for some $c\in \mathcal{C}_{\mathfrak{p}}$  we would have
$\models \varphi(c,b)\wedge d_{\mathfrak p}t\varphi(c,t)$ which implies $b\nmodels \mathfrak p_{\strok A\,c}$ and $p\nwor \tp(c/A)$. This is impossible because, by Lemma \ref{Lworinv}, $(p,\phi_{\mathfrak p})$  satisfies  WOR.

\smallskip (ii) Follows from part (i)  by induction.

\smallskip (iii) To prove the $\Rightarrow )$ part  assume $b\nmodels \mathfrak{p}_{\strok Aa}$. Then $(a,b)$ is not a Morley sequence and, by part (i), $\tp(a/Ab)$ is not a
$\mathcal{C}_{\mathfrak{p}}$-type. Choose $\theta(x,b)\in\tp(a/Ab)$ which is not satisfied in $\mathcal{C}_{\mathfrak{p}}$. Then $\theta(x,b)\wedge \phi_{\mathfrak p}(x)\vdash p(x)$ witnesses  $a\in\Sem_A(b)$. This proves the $\Rightarrow)$ part.

\smallskip
For the $\Leftarrow)$ part assume $b\models \mathfrak{p}_{\strok Aa}$. Then $(a,b)$ is a Morley sequence over $A$. Since $p$ is non-isolated, it is finitely satisfiable in $\mathcal{C}_{\mathfrak{p}}$ so,
by Fact \ref{Ffs}, it has an extension in $S_1(Ab)$ which is is finitely satisfiable in   $\mathcal{C}_{\mathfrak{p}}$;  let $a'$ realize it. By part (i) $(a',b)$ is a Morley sequence over $A$ so  $\tp(a,b/A)=\tp(a',b/A)$. Hence $\tp(a/Ab)$ is finitely satisfiable in $\mathcal{C}_{\mathfrak{p}}$ and $a\notin\Sem_A(b)$.
\end{proof}

\begin{prop}\label{Psem}
Suppose that $(\mathfrak{p},\phi_{\mathfrak p})$ is $A$-invariant, strongly regular and generically stable.
Further, suppose that $\mathfrak{q}$ is $A$-invariant and that  $a\models \mathfrak{p}_{\strok A}$ and $\bar b\models \mathfrak{q}_{\strok A}$ are such that $a\nmodels \mathfrak{p}_{\strok A\bar b}$. Then $\bar b$ semi-isolates $a$ over $A$.
\end{prop}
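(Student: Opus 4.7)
The plan is to reduce to the non-isolated case and then exhibit a semi-isolating formula by mimicking the strategy of Lemma \ref{Lsemi}(i), augmented with Lemma \ref{L1} to cope with $\bar b$ realizing $\mathfrak q$ rather than $\mathfrak p$. First, if $p=\mathfrak p_{\strok A}$ is isolated by some $\theta(x)\in p$, then $\theta(x)\in\tp(a/A\bar b)$ and $\theta(x)\vdash p(x)$, so $\bar b$ semi-isolates $a$ over $A$ trivially. Henceforth assume $p$ is non-isolated, so $\mathcal C_{\mathfrak p}:=\phi_{\mathfrak p}(\bar M)\setminus p(\bar M)\neq\emptyset$; by Fact \ref{Fsemifs}(1) it suffices to produce some $\psi(x,\bar b)\in\tp(a/A\bar b)$ that is not satisfied in $\mathcal C_{\mathfrak p}$.

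Choose $\phi(x,\bar y)\in\tp(a,\bar b/A)$ with $\phi(x,\bar b)\in\tp(a/A\bar b)\setminus\mathfrak p_{\strok A\bar b}$ witnessing $a\nmodels\mathfrak p_{\strok A\bar b}$. Since $\mathfrak p$ is generically stable, Lemma \ref{L1} forces option (B): the partial type
\[
p(x)\cup\mathfrak q^2_{\strok A}(\bar y_1,\bar y_2)\cup\{\phi(x,\bar y_1)\wedge\phi(x,\bar y_2)\}
\]
is inconsistent. By compactness pick $\varphi_p(x)\in p$ and $\chi(\bar y_1,\bar y_2)\in\mathfrak q^2_{\strok A}$ with $\varphi_p(x)\wedge\chi(\bar y_1,\bar y_2)\wedge\phi(x,\bar y_1)\wedge\phi(x,\bar y_2)$ inconsistent, and define
\[
\psi(x,\bar b):=\varphi_p(x)\wedge\phi_{\mathfrak p}(x)\wedge\phi(x,\bar b)\wedge\forall\bar y\bigl(\chi(\bar b,\bar y)\to\neg\phi(x,\bar y)\bigr).
\]
The first three conjuncts lie in $\tp(a/A\bar b)$ because $a\models p$ and $\phi(a,\bar b)$ holds; for the last conjunct, take any $\bar b'$ with $\chi(\bar b,\bar b')$: were $\phi(a,\bar b')$ to hold, the tuple $(a,\bar b,\bar b')$ would satisfy all four inconsistent formulas, which is impossible. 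Hence $\psi(x,\bar b)\in\tp(a/A\bar b)$.

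The decisive step is to show that $\psi(x,\bar b)$ is not satisfied in $\mathcal C_{\mathfrak p}$. Suppose $c\in\mathcal C_{\mathfrak p}$ satisfies $\psi(x,\bar b)$; in particular $\phi(c,\bar b)$ holds. Take $\bar b_2\models\mathfrak q_{\strok A\bar bc}$, so $(\bar b,\bar b_2)$ is Morley in $\mathfrak q$ over $A$, giving $\chi(\bar b,\bar b_2)$ and hence $\neg\phi(c,\bar b_2)$ from $\psi$. The contradiction is to be extracted from the strong regularity of $(\mathfrak p,\phi_{\mathfrak p})$: WOR at level $A$ gives $p\vdash\mathfrak p_{\strok Ac}$, and combined with the commutation $\mathfrak p\otimes\mathfrak q=\mathfrak q\otimes\mathfrak p$ (from generic stability of $\mathfrak p$) one expects to rigidify $\tp(\bar b/Ac)$ enough to conclude $\phi(c,\bar y)\in\mathfrak q_{\strok Ac}$, whence $\bar b_2\models\mathfrak q_{\strok Ac}$ would force $\phi(c,\bar b_2)$, contradicting $\neg\phi(c,\bar b_2)$.

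The main obstacle is precisely this last step: transferring strong regularity information about $\mathfrak p$-generic behaviour over $Ac$ into control over $\mathfrak q$-generic behaviour of $\bar b$ over $Ac$. I expect that the cleanest route is to exploit the definability of $\mathfrak p$ (guaranteed by generic stability) by reformulating the fourth conjunct of $\psi$ in terms of the $\mathfrak p$-definition scheme, so that the commutation $\mathfrak p\otimes\mathfrak q=\mathfrak q\otimes\mathfrak p$ directly gives the needed formula of $\mathfrak q_{\strok Ac}$; if this fails, one falls back on Fact \ref{Fext} to extend $\tp(a/A\bar b)$ to a $\mathcal C_{\mathfrak p}$-type over a long Morley sequence $(\bar b_i)_{i<\omega}$ in $\mathfrak q$, use Prop \ref{Pwt1} to force the realization to be $\mathfrak p$-independent of $\bar b_{\geq 1}$, and derive a contradiction with the (in)discernibility of the sequence combined with the generic stability of $\mathfrak p$.
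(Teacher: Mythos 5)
Your proof has a genuine gap, and you flag it yourself. The candidate formula
\[
\psi(x,\bar b)=\varphi_p(x)\wedge\phi_{\mathfrak p}(x)\wedge\phi(x,\bar b)\wedge\forall\bar y\,(\chi(\bar b,\bar y)\Rightarrow\neg\phi(x,\bar y))
\]
cannot, by the argument you give, be shown to fail in $\mathcal C_{\mathfrak p}=\phi_{\mathfrak p}(\bar M)\smallsetminus p(\bar M)$. Option (B) of Lemma \ref{L1} constrains only realizations of $p$: it forbids a realization of $p$ from satisfying $\phi$ at two $\mathfrak q$-independent tuples. It says nothing about elements of $\mathcal C_{\mathfrak p}$, so a $c\in\mathcal C_{\mathfrak p}$ with $\phi(c,\bar b)\wedge\phi(c,\bar b_2)$ for $(\bar b,\bar b_2)\models\mathfrak q^2_{\strok A}$ is simply not ruled out by (B). The WOR consequence $p\vdash\mathfrak p_{\strok Ac}$ controls $\mathfrak p$-extensions over $Ac$, not the behaviour of $\mathfrak q_{\strok Ac}$ with respect to $\phi(c,\cdot)$, and the commutation $\mathfrak p\otimes\mathfrak q=\mathfrak q\otimes\mathfrak p$ does not by itself close the gap. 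Neither of your two sketched fallback routes is carried through, and the one invoking Proposition \ref{Pwt1} and an extension over a Morley sequence in $\mathfrak q$ introduces machinery that plays no effective role.

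The paper's proof avoids Lemma \ref{L1} and Proposition \ref{Pwt1} entirely and runs in the contrapositive. Fix $\theta(x,\bar b)\in\tp(a/A\bar b)$ witnessing $a\nmodels\mathfrak p_{\strok A\bar b}$ with $\theta(x,\bar b)\Rightarrow\phi_{\mathfrak p}(x)$, and suppose $\bar b$ does not semi-isolate $a$. By Fact \ref{Fsemifs}(2), $\tp(a/A\bar b)$ is finitely satisfiable in $\theta(\bar M,\bar b)\smallsetminus p(\bar M)\subseteq\mathcal C_{\mathfrak p}$. Take a $\mathcal C_{\mathfrak p}$-sequence $(a_0,\ldots,a_n)$ of realizations of $\tp(a/A\bar b)$; Lemma \ref{Lsemi}(ii) --- the decisive tool, resting on WOR and definability of $\mathfrak p$ --- turns the reversed sequence into a Morley sequence in $\mathfrak p$ over $A$, and every $a_i$ satisfies $\theta(x,\bar b)$. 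By compactness one gets an infinite Morley sequence $(a_i')_{i<\omega}$ in $\mathfrak p$ over $A$ with some $\bar b\models q$ realizing all $\theta(a_i',\bar y)$; extending the Morley sequence over $A\bar b$ yields $a'_{\omega+i}$ with $\neg\theta(a'_{\omega+i},\bar b)$ since $\neg\theta(x,\bar b)\in\mathfrak p$. This contradicts the generic stability of $\mathfrak p$. So the missing idea in your attempt is precisely Lemma \ref{Lsemi}(ii): it replaces the need to exhibit an explicit semi-isolating formula with a direct path to a violation of generic stability.
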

\begin{proof}  Let $p,q$ denote $\mathfrak{p}_{\strok A}$ and
$\mathfrak{q}_{\strok A}$ respectively.   Choose   $\theta(x,\bar
b)\in \tp(a/A\bar b)$ witnessing  $a\nmodels \mathfrak{p}_{\strok
A\bar b}$ and, without loss of generality, assume $\models
\theta(x,\bar b)\Rightarrow \phi_{\mathfrak p}(x)$. Suppose that
the conclusion of the proposition fails: $\bar b$ does not
semi-isolate $a$ over $A$.   Then   $\tp(a/A\bar b)$ is finitely
satisfiable in $\mathcal C= \theta(\bar M,\bar b)\smallsetminus
p(\bar M)$. Let  $(a_i\,|\,i\leq n)$ be a $\mathcal C$-sequence of
realizations of  $\tp(a/A\bar b)$. Since $\mathcal C\subseteq
\mathcal C_{\mathfrak p}= \phi_{\mathfrak p}(\bar M)\smallsetminus
p(\bar M)$ $(a_i\,|\,i\leq n)$ is also a $\mathcal C_{\mathfrak
p}$-sequence.   By Lemma \ref{Lsemi} 
$(a_n,a_{n-1},...,a_0)$ is a Morley sequence in $\mathfrak{p}$
over $A$. Since   $\models \theta(a_i,\bar b)$ holds for all
$i\leq n$ 
$$(\mathfrak p^n)_{\strok A}(x_1,...,x_n)\cup
\{ \theta(x_i,\bar b)\,|\,a\leq i\leq n\}$$ is consistent for all
$n$. Let $(a_i'\,|\,i\in\omega)$ be an infinite Morley sequence in
$\mathfrak{p}$ over $A$. By compactness \
 $q(\bar y)\cup\{\theta(a_i',\bar y)\,|\,i\in \omega\}$  \
is consistent so, without loss of generality, assume that $\bar b$ realizes that type. Let
$(a_{\omega+i}'\,|\,i\in\omega)$ be an infinite Morley sequence in $\mathfrak{p}$ over $A\bar a_{<\omega}'\bar b$. Then $\neg\theta(x,\bar b)\in\mathfrak{p}$ implies  that
$\models\neg\theta(a_{\omega+i}', \bar b)$ holds for all $i\in\omega$. Therefore $\bar b$ realizes
$$q(\bar y)\cup\{\theta(a_i',\bar y)\,|\,i\in \omega\}\cup\{\neg\theta(a_{\omega+i}',\bar y)\,|\,i\in \omega\}$$
and $\mathfrak{p}$ is not generically stable. A contradiction.
\end{proof}

\begin{lem}\label{Lnorrk}
  $\mathfrak{p}\leq_{RK}\mathfrak{q}$ \  implies \ $\mathfrak{p}\nor \mathfrak{q}$.
\end{lem}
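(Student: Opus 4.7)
The plan is to argue by contradiction. Assume $\mathfrak{p}\leq_{RK}\mathfrak{q}$ is witnessed by $\bar a\models\mathfrak{p}$, $\bar b\models\mathfrak{q}$ and a formula $\phi(\bar x,\bar y)$ over $\bar M$ with $\models\phi(\bar a,\bar b)$ and $\phi(\bar x,\bar b)\vdash\mathfrak{p}(\bar x)$, and suppose toward a contradiction that $\mathfrak{p}\perp\mathfrak{q}$. I will force $\mathfrak{p}$ to be algebraic, contradicting the standing convention that global types compared by $\leq_{RK}$ are non-algebraic.

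Weak orthogonality means $\mathfrak{p}(\bar x)\cup\mathfrak{q}(\bar y)$ has a unique completion in $S(\bar M)$ and therefore decides every formula over $\bar M$; since $(\bar a,\bar b)$ realizes $\mathfrak{p}\cup\mathfrak{q}$ together with $\phi(\bar x,\bar y)$, this yields $\mathfrak{p}(\bar x)\cup\mathfrak{q}(\bar y)\vdash\phi(\bar x,\bar y)$, and specializing $\bar y$ to the given $\bar b$ gives $\mathfrak{p}(\bar x)\vdash\phi(\bar x,\bar b)$. The decisive step is then an application of compactness: a finite conjunction $\Theta(\bar x)\in\mathfrak{p}$ satisfies $\Theta(\bar x)\vdash\phi(\bar x,\bar b)$. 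Combined with the semi-isolation $\phi(\bar x,\bar b)\vdash\mathfrak{p}(\bar x)$ this gives $\Theta(\bar x)\vdash\theta(\bar x)$ for every $\theta\in\mathfrak{p}$, so $\Theta\vdash\mathfrak{p}$; together with $\Theta\in\mathfrak{p}$ this says that $\Theta$ and $\mathfrak{p}$ share exactly the same realizations, and hence $\Theta(\bar x)$ is a single formula over $\bar M$ that isolates $\mathfrak{p}$.

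To finish, $\Theta$ is consistent (witnessed by $\bar a$) and has its parameters in the model $\bar M$, so $\exists\bar x\,\Theta(\bar x)$ is a true sentence over $\bar M$ and $\Theta$ is satisfied by some $\bar a_0\in\bar M$. Then $\bar a_0\models\mathfrak{p}$, so $\mathfrak{p}$ is algebraic --- the required contradiction.

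I do not foresee a substantial obstacle; the argument is a clean three-move unpacking (orthogonality to specialize, compactness, elementarity in $\bar M$). The only point worth flagging is the non-algebraicity convention, since an algebraic $\mathfrak{p}$ would automatically satisfy both $\mathfrak{p}\leq_{RK}\mathfrak{q}$ and $\mathfrak{p}\perp\mathfrak{q}$ and would make the statement false as literally written.
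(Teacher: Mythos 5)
Your proof is correct, and it takes a genuinely different route from the paper's. The paper's argument is direct: since $\bar a\in\Sem_{\bar M}(\bar b)$, the formula $\phi(\bar x,\bar b)$ witnessing semi-isolation entails $\mathfrak p_{\strok\bar M}$ and therefore has no solutions in $\bar M$ (this is where non-algebraicity enters), so $\tp(\bar a/\bar M\bar b)$ is not finitely satisfiable in $\bar M$; on the other hand $\mathfrak p$, being over a model, has a coheir extension to $\bar M\bar b$, which is a second, distinct completion of $\mathfrak p(\bar x)\cup\mathfrak q(\bar y)$, so $\mathfrak p\nor\mathfrak q$. You instead run a contradiction with $\mathfrak p\perp\mathfrak q$ and derive that $\mathfrak p$ would be \emph{isolated} over $\bar M$: weak orthogonality forces $\mathfrak p(\bar x)\vdash\phi(\bar x,\bar b)$, compactness gives a finite $\Theta\in\mathfrak p$ with $\Theta\vdash\phi(\bar x,\bar b)\vdash\mathfrak p$, and then elementarity of $\bar M$ yields a realization of $\Theta$ (hence of $\mathfrak p$) inside $\bar M$. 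Both proofs use that $\bar M$ is a model and that $\mathfrak p$ is non-algebraic; the paper packages these as ``a coheir exists but $\tp(\bar a/\bar M\bar b)$ is not one,'' while you package them as ``an isolated type over a model is realized in it.'' Your version has the virtue of making the non-algebraicity hypothesis explicit, which the paper leaves implicit in the step asserting that $\tp(\bar a/\bar M\bar b)$ is not a coheir.
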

\begin{proof} Choose $\bar a$ and $\bar b$  realizing $\mathfrak{p}$ and $\mathfrak{q}$ respectively   such that $\bar a\in\Sem_{\bar M}(\bar b)$; then $\tp(\bar a/\bar M\bar b)$ is not a coheir of $\mathfrak{p}$. Therefore  $\tp(\bar a/\bar M\bar b)$ and a coheir are two distinct global extensions of $\mathfrak{p}(x)$, so $\mathfrak{p}\nor \mathfrak{q}$ holds.
\end{proof}

The following is a   technical version of Theorem
\ref{Tstrregintro}:

\begin{thm1}\label{Trk}
(1) Strongly regular, generically stable types are minimal in the Rudin-Keisler global order. Moreover,
if $\mathfrak p$ is strongly regular and  generically stable   then for any $\mathfrak q$: \  $\mathfrak p\nor \mathfrak q$ iff $\mathfrak p\leq_{RK} \mathfrak q$.

\smallskip (2) Non-orthogonal strongly regular, generically stable types are strongly $RK$-equivalent. Moreover, whenever $a,b$ realize over $\bar M$ such types $\mathfrak p$ and $\mathfrak q$  and $a\nmodels \mathfrak p_{\strok \bar M b}$ holds, then:  $b\nmodels \mathfrak q_{\strok \bar M a}$, $b\in \Sem_{\bar M}(a)$ and $a\in \Sem_{\bar M}(b)$.
\end{thm1}
\begin{proof} (1) Suppose that $\mathfrak p$ is strongly regular and generically stable. Then, by Lemma \ref{Lnorrk}, $\mathfrak{p}\leq_{RK}\mathfrak{q}$    implies   $\mathfrak{p}\nor \mathfrak{q}$. To prove the other direction assume   $\mathfrak q\nor \mathfrak p$ and let $a\models \mathfrak p$ and $\bar b\models \mathfrak q$ be such that $a\nmodels \mathfrak p_{\strok \bar M \bar b}$. Then, by Proposition \ref{Psem}, $a\in \Sem_{\bar M}(\bar b)$ holds, so $\mathfrak p\leq_{RK}\mathfrak q$. This proves the other implication.

\smallskip (2) Suppose that both $\mathfrak p$ and $\mathfrak q$ are strongly regular,
generically stable and that $a\models \mathfrak p$ and $b\models
\mathfrak q$ are such that $a\nmodels \mathfrak p_{\strok \bar M
b}$. Then, by Proposition \ref{Psem}, $a\in \Sem_{\bar M}(b)$
holds. Let $a_1\models \mathfrak p$ be such that   $b\nmodels
\mathfrak p_{\strok \bar M a_1}$. By Proposition \ref{Psem} again
we have $b\in\Sem_{\bar M}(a_1)$. By transitivity $a\in\Sem_{\bar
M}(a_1)$ so $(a_1,a)$ is not a Morley sequence in $\mathfrak p$.
Then, by symmetry, $(a,a_1)$ is not a Morley sequence   so
$a_1\nmodels \mathfrak p_{\strok \bar Ma}$.  By Lemma
\ref{Lsemi}(iii) $a_1\in\Sem_{\bar M}(a)$ and, by transitivity,
$b\in\Sem_{\bar M}(a)$ holds.  This proves the "moreover" part 
and  strong RK-equivalence of $\mathfrak p$ and $\mathfrak q$
follows.
\end{proof}
We have just proved that strongly regular types are RK-minimal.
The converse is not  true: Take the theory of the random graph.
There distinct global 1-types are $\leq_{RK}$-incomparable, so every 1-type is
RK-minimal. However, none of them is strongly regular. We now
proceed towards proving    Theorem \ref{Tlocal}.
\begin{lem}\label{L35}
Suppose that $(\mathfrak{p}(x),\phi_{\mathfrak{p}}(x))$ and $(\mathfrak{q}(x),\phi_{\mathfrak{q}}(x))$ are  strongly regular, generically stable  and non-orthogonal. Let  $\mathfrak{r}\neq \mathfrak{q}$   be any global type containing $\phi_{\mathfrak{q}}(x)$. Then
$\mathfrak{p}\perp \mathfrak{r}$.
\end{lem}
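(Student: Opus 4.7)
The plan is to assume $\mathfrak{p}\nor\mathfrak{r}$ for contradiction and transport the non-orthogonality over to $\mathfrak{q}$ using the strong RK-equivalence of $\mathfrak{p}$ and $\mathfrak{q}$; the resulting $\mathfrak{q}\nor\mathfrak{r}$ will clash with the strong regularity of $(\mathfrak{q},\phi_{\mathfrak{q}})$. So suppose $\mathfrak{p}\nor\mathfrak{r}$. Theorem \ref{Trk}(1), which applies to any global type $\mathfrak{r}$, gives $\mathfrak{p}\leq_{RK}\mathfrak{r}$, so I can pick $a\models\mathfrak{p}$ and $c\models\mathfrak{r}$ in a suitable extension of the monster with $a\in\Sem_{\bar M}(c)$.

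Next I would invoke the hypothesis $\mathfrak{p}\nor\mathfrak{q}$: Theorem \ref{Trk}(2) yields $a'\models\mathfrak{p}$ and $b'\models\mathfrak{q}$ with $b'\in\Sem_{\bar M}(a')$. An automorphism of the extended monster fixing $\bar M$ sends $a'$ to $a$ (possible since both realize $\mathfrak{p}$), and its image $b$ of $b'$ realizes $\mathfrak{q}$ and satisfies $b\in\Sem_{\bar M}(a)$, since semi-isolation is a formula-based relation with parameters in $\bar M$ and is therefore preserved. Transitivity of semi-isolation then gives $b\in\Sem_{\bar M}(c)$, so $\mathfrak{q}\leq_{RK}\mathfrak{r}$, and Lemma \ref{Lnorrk} delivers $\mathfrak{q}\nor\mathfrak{r}$.

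To close the argument I use the strong regularity of $(\mathfrak{q},\phi_{\mathfrak{q}})$. Because $\mathfrak{r}\neq\mathfrak{q}$ while $\phi_{\mathfrak{q}}\in\mathfrak{r}$, our $c$ satisfies $\phi_{\mathfrak{q}}$ but does not realize $\mathfrak{q}$, so Proposition \ref{PequivSR}(ii) applied with $B=\bar M$ yields $\mathfrak{q}\vdash\mathfrak{q}_{\strok \bar M c}$. Hence for every $b_0\models\mathfrak{q}$ we have $\tp(b_0/\bar M c)=\mathfrak{q}_{\strok \bar M c}$, so $\tp(b_0,c/\bar M)$ is determined by $\mathfrak{q}$ and $\mathfrak{r}=\tp(c/\bar M)$ alone; this is exactly $\mathfrak{q}\perp\mathfrak{r}$, contradicting the previous paragraph. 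The step deserving the most care is the automorphism alignment of the two separately produced witnesses $(a,c)$ and $(a',b')$: once they share a common base realization $a$ of $\mathfrak{p}$, the transitivity chain from $c$ to $b$ through $a$ proceeds verbatim.
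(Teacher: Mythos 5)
Your proof is correct and follows the paper's argument step for step: derive $\mathfrak{p}\leq_{RK}\mathfrak{r}$ from Theorem \ref{Trk}(1), use Theorem \ref{Trk}(2) to get $\mathfrak{q}\leq_{RK}\mathfrak{p}$ and hence $\mathfrak{q}\leq_{RK}\mathfrak{r}$ by transitivity of semi-isolation, apply Lemma \ref{Lnorrk} to obtain $\mathfrak{q}\nor\mathfrak{r}$, and observe this contradicts the strong regularity of $(\mathfrak{q},\phi_{\mathfrak{q}})$; you merely spell out the transitivity and the final WOR contradiction that the paper leaves implicit. The only cosmetic point is that Proposition \ref{PequivSR}(ii) is stated for small $B\supseteq A$, so the case ``$B=\bar M$'' should be read as the routine approximation: pick a formula separating $\mathfrak{r}$ from $\mathfrak{q}$, enlarge $A$ by its parameters together with those of any target formula in $\mathfrak{q}_{\strok\bar M c}$, and apply the proposition there.
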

\begin{proof}Suppose   that the conclusion fails: $\mathfrak{p}\nor \mathfrak{r}$.  Theorem \ref{Trk}\,(1) implies $\mathfrak{p}\leq_{RK} \mathfrak{r}$.   $\mathfrak{p}\nor \mathfrak{q}$, by  Theorem \ref{Trk}\,(2), implies $\mathfrak{p}\equiv_{RK}\mathfrak{q}$.
Combining the two we conclude $\mathfrak{q}\leq_{RK}\mathfrak{r}$ and, by Lemma \ref{Lnorrk}, $\mathfrak q\nor \mathfrak r$, contradicting   the strong regularity of $(\mathfrak{q}(x),\phi_{\mathfrak{q}}(x))$.
\end{proof}

\noindent{\em Proof of Theorem \ref{Tlocal}.} \ Suppose that
$(\mathfrak{p}(x),\phi_{\mathfrak{p}}(x))$ and
$(\mathfrak{q}(x),\phi_{\mathfrak{q}}(x))$ are $M$-invariant,
strongly regular and generically stable.  We will prove that  the
following conditions are all equivalent: \

(1) \ \  $\mathfrak p\nor\mathfrak q$;

\smallskip (2) \ \ $\mathfrak{p}_{\strok M}\nwor \mathfrak{q}_{\strok M}$;

\smallskip (3)  \ \ For all $C\supseteq  M$: \ $\mathfrak{p}_{\strok
C}\nwor \mathfrak{q}_{\strok C}$.

\noindent (2)$\Rightarrow$(3) holds  by  Fact  \ref{For} (or by
Proposition \ref{Pwor=or}), and (3)$\Rightarrow$(1) is obvious. We
will  prove (1)$\Rightarrow$(2). So assume $\mathfrak p\nor
\mathfrak q$ and let $\bar c$ be such that $\mathfrak{p}_{\strok
M\bar c}\nwor \mathfrak{q}_{\strok M\bar c}$. Choose $a\models
\mathfrak{p}_{\strok M\bar c}$ and  $b\models \mathfrak{q}_{\strok
M\bar c}$ such that $a\nmodels \mathfrak{p}_{\strok M\bar cb}$.
Suppose that $\varphi(x,b,\bar c)\notin \mathfrak{p}_{\strok
Mb\bar c}$ is satisfied by $a$. Then:
$$\models d_{\mathfrak{p}}x\exists y(\phi_{\mathfrak{q}}(y)\wedge
\varphi(x,y,\bar c)\wedge \neg d_{\mathfrak{p}}t\varphi(t,y,\bar
c)) .$$ Let $\bar c'\in M$ satisfy the formula in place of $\bar
c$. Then:
$$\models \exists y(\phi_{\mathfrak{q}}(y)\wedge
\varphi(a,y,\bar c')\wedge \neg d_{\mathfrak{p}}t\varphi(t,y,\bar
c')) .$$ Let $\psi(x,y)$ be the formula  \   $
\phi_{\mathfrak{q}}(y)\wedge \varphi(x,y,\bar c')\wedge \neg
d_{\mathfrak{p}}t\varphi(t,y,\bar c')$. \ We {\em claim}
$\psi(a,y)\vdash \mathfrak q_{\strok M}(y)$. Suppose for a
contradiction that $b'$ satisfies  $\models \psi(a,b')$ and
$\tp(b'/M)\neq \mathfrak q_{\strok M}$. Choose a formula
$\theta(y)\in\tp(b'/M)$ which is not in $q$ and implies
$\phi_{\mathfrak q}(y)$. Then $\models \psi(a,b')\wedge
\theta(b')$ so $\exists y(\psi(x,y)\wedge \theta(y)\in\mathfrak
p$. Let $a_0\models \mathfrak p$ and let $b_0$ be such that
$\models \psi(a_0,b_0)\wedge \theta(b_0)$.  \ $\models
\psi(a_0,b_0)$ implies $\models \varphi(a_0,b_0,\bar c')\wedge
\neg d_{\mathfrak{p}}t\varphi(t,b_0,\bar c')$ so $a_0\nmodels
\mathfrak p_{\strok\bar Mb_0}$. Therefore $\mathfrak p\nor \mathfrak
r$, where $\mathfrak r=\tp(b_0/\bar M) $. Now $\theta(y)\in
\mathfrak r$ implies $\phi_{\mathfrak q}(y)\in \mathfrak r\neq
\mathfrak q$ so, by Lemma \ref{L35}, $\mathfrak p\perp \mathfrak
r$. A contradiction. Thus $\psi(a,y)\vdash \mathfrak q_{\strok
M}(y)$.

To finish the proof we note that $\mathfrak q_{\strok M}(y)$ has at
least two distinct extensions in $S(Ma)$: one that contains
$\psi(a,y)$ and the coheir. Therefore $\mathfrak p_{\strok M}\nwor
\mathfrak q_{\strok M}$. \qed

\section{Omitting types}\label{Somit}

\begin{lem}\label{Lnonisol}
Suppose that $\mathfrak p$ is generically stable and  regular over $A$, 
and that $\mathfrak p_{\strok A}$ is non-isolated. Then $\mathfrak
p_{\strok B}$ is non-isolated for all $B\supseteq A$.
\end{lem}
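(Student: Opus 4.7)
The plan is to argue by contradiction, leveraging the weight-one principle of Proposition \ref{Pwt1}. Suppose that for some $B\supseteq A$ the type $\mathfrak p_{\strok B}$ is isolated by a formula $\phi(x,\bar b)$ with $\bar b\in B$. Then $\phi(x,\bar b)\vdash \mathfrak p_{\strok B}\supseteq \mathfrak p_{\strok A}$, and in particular $\phi(x,\bar b)\in \mathfrak p$.

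First I would choose any global $A$-invariant extension $\mathfrak r$ of $\tp(\bar b/A)$ (for example, a coheir), and take a Morley pair $(\bar b_1,\bar b_2)\models \mathfrak r\otimes \mathfrak r_{\strok A}$, so that $\bar b_i\equiv \bar b\,(A)$ for $i=1,2$. The key observation is that two properties of the parameter $\bar y$ are both $\Aut(\bar M/A)$-invariant, hence transfer from $\bar b$ to each $\bar b_i$: the first, ``$\phi(x,\bar y)\in \mathfrak p$'', by the $A$-invariance of $\mathfrak p$; and the second, ``$\phi(x,\bar y)\vdash \mathfrak p_{\strok A}$'', because it is the infinite conjunction of sentences $\forall x(\phi(x,\bar y)\to \psi(x))$ for $\psi\in \mathfrak p_{\strok A}$, each of which depends only on $\tp(\bar y/A)$. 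Thus $\phi(x,\bar b_i)\in \mathfrak p_{\strok A\bar b_i}$ and $\phi(x,\bar b_i)\vdash \mathfrak p_{\strok A}$ for $i=1,2$.

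Next I would exploit non-isolation of $\mathfrak p_{\strok A}$: by Fact \ref{Ffs} there is $\phi_0\in \mathfrak p_{\strok A}$ such that $\mathfrak p_{\strok A}$ is finitely satisfiable in $\mathcal C_0:=\phi_0(\bar M)\smallsetminus \mathfrak p_{\strok A}(\bar M)$, and by Fact \ref{Fext} it extends to a $\mathcal C_0$-type $p^*\in S(A\bar b_1\bar b_2)$. Pick $a\models p^*$; then $a\models \mathfrak p_{\strok A}$. Since every realization of $\phi(x,\bar b_i)$ lies in $\mathfrak p_{\strok A}(\bar M)$ and so avoids $\mathcal C_0$, the formula $\phi(x,\bar b_i)$ is not finitely satisfiable in $\mathcal C_0$ and hence cannot be in $p^*$; therefore $\neg\phi(a,\bar b_i)$, and since $\phi(x,\bar b_i)\in \mathfrak p_{\strok A\bar b_i}$ this forces $a\nmodels \mathfrak p_{\strok A\bar b_i}$ for both $i=1,2$. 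On the other hand, Proposition \ref{Pwt1} applied with $\mathfrak q=\mathfrak r$ to the Morley pair $\bar b_1\bar b_2\models \mathfrak q\otimes \mathfrak r_{\strok A}$ forces $a\models \mathfrak p_{\strok A\bar b_1}$ or $a\models \mathfrak p_{\strok A\bar b_2}$, a contradiction.

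The step I expect to be the main obstacle is the one that isolates those two $A$-conjugation-invariant properties of $\phi(x,\bar y)$, which is what permits passing from the single isolating witness $\bar b$ to a Morley pair of $A$-conjugates and thereby activating the weight-one principle; once that is noted, the $\mathcal C_0$-extension construction and Proposition \ref{Pwt1} combine into the contradiction almost mechanically.
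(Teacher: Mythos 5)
Your argument is genuinely different from the paper's (which builds an explicit isolating formula $d_{\mathfrak p^n}z_1\ldots z_n\psi(x,\bar z)$ from the definability of $\mathfrak p^n$), and most of it is sound: the two $\Aut(\bar M/A)$-invariance observations about the isolating formula are correct, the $\mathcal C_0$-type construction via Facts~\ref{Ffs} and~\ref{Fext} is correct, and the final contradiction with Proposition~\ref{Pwt1} would be valid. The problem is the very first step. You ``choose any global $A$-invariant extension $\mathfrak r$ of $\tp(\bar b/A)$ (for example, a coheir)''. Coheirs give invariance only over \emph{models}; over an arbitrary small set $A$ a type need not admit \emph{any} global $A$-invariant extension (standard example: an equivalence relation with exactly two infinite classes and $A=\emptyset$ --- no global $1$-type can be invariant under the automorphism swapping the classes, though some $\tp(b/\emptyset)$ must be extended). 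Since Proposition~\ref{Pwt1} is stated for, and its proof genuinely needs, $A$-invariant $\mathfrak q,\mathfrak r$ (so that Morley sequences and $\mathfrak q\otimes\mathfrak r$ make sense over $A$), your application of it collapses if no such $\mathfrak r$ exists.

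You also cannot escape by passing to a model $M\supseteq A$: to run Proposition~\ref{Pwt1} over $M$ you would need $a\models\mathfrak p_{\strok M}$, and to keep your $\mathcal C_0$-argument working you would want $\mathfrak p_{\strok M}$ non-isolated --- but that is precisely the instance of the lemma being proved, so the reduction is circular. And the lemma really is needed for non-model bases: Theorem~\ref{Tomit} applies it with $A$ merely a countable set. The paper's proof sidesteps all of this by never invoking an invariant extension of $\tp(\bar b/A)$; it instead uses generic stability to bound the length of Morley sequences in $\mathfrak p$ avoiding $\varphi(x,\bar b)$, regularity over $A\bar b$ to organize those sequences, and definability of $\mathfrak p^n$ to turn the resulting maximality statement into a formula over $A$ that isolates $\mathfrak p_{\strok A}$, contradicting non-isolation directly. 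If you want to salvage your route, you would have to either justify the existence of an $A$-invariant $\mathfrak r$ under the lemma's hypotheses, or reprove a version of Proposition~\ref{Pwt1} that applies to a Lascar-invariant (rather than $A$-invariant) $\mathfrak r$; neither is immediate.
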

\begin{proof}
Otherwise  there is $\bar b$ such that $\mathfrak p_{\strok A\bar b}$
is isolated, by $\varphi(x,\bar b)$ say. Since $\mathfrak p_{\strok A}$ is non-isolated it has a non-isolated extension in $S_1(A\bar b)$, so  \
$\mathfrak p_{\strok A}(x)\cup\{\neg\varphi(x,\bar b)\}$ \ is
consistent.  Let $n$ be  the length of the longest possible  Morley
sequence $a_1,a_2,...,a_n$ in $\mathfrak p$ over $A$ satisfying
$\models \bigwedge_{i=1}^{n}\neg\varphi(a_i,\bar b)$; it exists because $\varphi(x,\bar b)\in \mathfrak p$ and
$\mathfrak p$ is generically stable.  Since $\mathfrak p$ is regular over $A\bar
b$  and none of the $a_i$'s realize $\mathfrak p_{\strok
A\bar b}$ we have $\mathfrak p_{\strok A\bar b}(x)\vdash\mathfrak
p_{\strok A\bar ba_1...a_n}(x)$. Let  $a$ realize $\mathfrak p_{\strok
Aa_1...a_n}$. The maximality of $n$ implies \ $\tp_{\bar y}(\bar
b/A) \cup \{\bigwedge_{i=1}^{n}\neg\varphi(a_i,\bar y)\}\vdash
\varphi(a, \bar y)$. Let $\phi(\bar y)\in \tp_{\bar y}(\bar b/A)$
be such that:
$$\models \forall\bar y((\phi(\bar y)\wedge\bigwedge_{i=1}^{n}\neg\varphi(a_i,\bar
y) )\Rightarrow \varphi(a,\bar y))$$ Denote this formula  by
$\psi(a,a_1,...,a_n)$. Then $\models d_{\mathfrak
p^n}z_1...z_n\psi(a,z_1,...,z_n)$.  We will reach the
contradiction by showing that $d_{\mathfrak
p^n}z_1...z_n\psi(x,z_1,...,z_n)$ isolates  $\mathfrak p_{\strok
A}$. So let $a'$ satisfy the formula and, without loss of
generality, assume that $(a_1,...,a_n)$ witnesses the $d_{\mathfrak
p^n}$ quantifier: $a_1...a_n\models\mathfrak p^n_{\strok Aa'}$.
Then: \setcounter{equation}{0}
\begin{equation}\label{erc}\models \forall\bar y((\phi(\bar
y)\wedge\bigwedge_{i=1}^{n}\neg\varphi(a_i,\bar y) )\Rightarrow
\varphi(a',\bar y))\end{equation} Let $\bar b'$ be such that $\bar
b'\equiv \bar b\,(Aa_1...a_n)$. Then the left hand side of the
implication in (\ref{erc}) is satisfied by $\bar b'$, so we derive
$\models\varphi(a',\bar b')$. Since $\varphi(x,\bar b')$ isolates
$\mathfrak p_{\strok A\bar b'}$ we conclude   $a'\models\mathfrak
p_{\strok A\bar b'}$ and, in  particular, $a'\models\mathfrak p_{\strok
A}$.
\end{proof}

\begin{prop}\label{Pwor=or}
Suppose that $\mathfrak p$ is generically stable and regular over $A$, and that
$\mathfrak q$ is $A$-invariant. Then $\mathfrak p\perp \mathfrak q$ implies  $\mathfrak
p_{\strok C}\wor \mathfrak q_{\strok C}$ for all $C\supseteq A$.
\end{prop}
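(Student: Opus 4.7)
I would prove the contrapositive: assume $\mathfrak{p}_{\strok C}\nwor\mathfrak{q}_{\strok C}$ for some $C\supseteq A$ and produce witnesses for $\mathfrak p\nor\mathfrak q$. First I extract the $C$-level data: pick $a,\bar b\in\bar M$ realizing $\mathfrak p_{\strok C}$ and $\mathfrak q_{\strok C}$ respectively with $a\nmodels\mathfrak p_{\strok C\bar b}$, and a formula $\varphi(x,\bar b,\bar c)\in\tp(a/C\bar b)\setminus\mathfrak p$ with $\bar c\in C$; so $\neg\varphi(x,\bar b,\bar c)\in\mathfrak p_{\strok C\bar b}$, which by $A$-invariance means $\neg\varphi(x,\bar b',\bar c)\in\mathfrak p$ for every $\bar b'\equiv\bar b\pmod{A\bar c}$.

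Second I propagate the dependence using the weight-one property. In a bigger monster $\tilde M$ choose $\bar b^{*}\models\mathfrak q$; then $(\bar b,\bar b^{*})$ is a Morley sequence in $\mathfrak q$ over $C$, so Proposition \ref{Pwt1} applied over $C$ (with $\mathfrak r=\mathfrak q$) forces $a\models\mathfrak p_{\strok C\bar b}$ or $a\models\mathfrak p_{\strok C\bar b^{*}}$; the first is excluded so $a\models\mathfrak p_{\strok C\bar b^{*}}$. Combining this with commutativity of generically stable $\mathfrak p$ with the invariant $\mathfrak q$, the relation $a\nmodels\mathfrak p_{\strok C\bar b}$ is equivalent to $\bar b\nmodels\mathfrak q_{\strok Ca}$, so the non-genericity sits entirely on the $\bar b$-side over $a$.

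Third I convert this into non-orthogonality at the $\bar M$-level. Since $a\models\mathfrak p_{\strok C}$ lifts to some $a^{*}\models\mathfrak p$ in $\tilde M$ with $a^{*}\equiv a\pmod C$, I transport $\bar b$ via an automorphism in $\Aut(\tilde M/C)$ sending $a\to a^{*}$ to obtain $\bar b^{**}$ with $\tp(a^{*},\bar b^{**}/C)=\tp(a,\bar b/C)$. By $A$-invariance of $\mathfrak q$ the non-generic condition $\tp(\bar b/Ca)\neq\mathfrak q_{\strok Ca}$ transports to $\tp(\bar b^{**}/Ca^{*})\neq\mathfrak q_{\strok Ca^{*}}$, and, by restriction, $\tp(\bar b^{**}/\bar Ma^{*})\neq\mathfrak q_{\strok\bar Ma^{*}}$. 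Provided $\bar b^{**}$ can be arranged to realize $\mathfrak q$ globally, the pair $(a^{*},\bar b^{**})$ witnesses $\mathfrak p\nor\mathfrak q$.

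The main obstacle is exactly this last arrangement: a naive automorphism over $C$ need not preserve $\mathfrak q$, and moreover a joint lift of $(a,\bar b)$ directly to realizations of $\mathfrak p\cup\mathfrak q$ is impossible since $\tp(a,\bar b/C)\cup\mathfrak p(x)\cup\mathfrak q(\bar y)$ is already inconsistent (from $\varphi\in\tp(a,\bar b/C)$ and $\neg\varphi\in\mathfrak p$ for every global $\mathfrak q$-realization in the second coordinate). The fix I expect to need is to use the definability of $\mathfrak p$ (available from generic stability) to replace $\varphi$ by a formula $\chi(x,\bar y)$ involving the $d_{\mathfrak p}$-scheme over $C$ such that $\chi$ witnesses non-completeness of $\mathfrak p(x)\cup\mathfrak q(\bar y)$ directly: one half is consistent because realized by the transported pair above $a^{*}$, and the other by a generic pair $(\tilde a,\bar b^{*})$ with $\tilde a\models\mathfrak p_{\strok\bar M\bar b^{*}}$. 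Verifying this last compactness/definability step is what I would treat as the crux of the proof.
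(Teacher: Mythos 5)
You correctly take the contrapositive and invoke Proposition~\ref{Pwt1}, which is indeed the engine of the paper's argument, and your observation that a naive lift of $(a,\bar b)$ to realizations of $\mathfrak p(x)\cup\mathfrak q(\bar y)$ is impossible is also correct. But that impossibility is precisely what makes the last step the entire content of the Proposition, and you leave it unverified. Your sketched fix via the $d_{\mathfrak p}$-scheme does not close the gap: the transported pair $(a^{*},\bar b^{**})$ still has $\bar b^{**}\nmodels\mathfrak q$, so it cannot witness consistency of one half of $\chi$ with $\mathfrak p(x)\cup\mathfrak q(\bar y)$; and showing that $\chi$ is left undecided by $\mathfrak p(x)\cup\mathfrak q(\bar y)$ requires consistency against formulas with parameters anywhere in $\bar M$, not merely over $C$. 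This is a genuine gap, not a compactness technicality.

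The paper circumvents the lifting problem by propagating the failure of weak orthogonality one parameter tuple at a time, keeping the same witness formula $\varphi(x,\bar y)$ over $A$ throughout. Fix an arbitrary $\bar c$; for each $\phi(x,\bar c)\in\mathfrak p_{\strok A\bar c}$ take a Morley sequence in $\mathfrak p$ over $A$ of maximal length on which $\phi(x,\bar c)$ fails (finite by generic stability), and let $D$ be the union of these sequences. Choose $\bar b\models\mathfrak q_{\strok AD\bar c}$ and $a\models\mathfrak p_{\strok A}$ still satisfying $\models\varphi(a,\bar b)$; Proposition~\ref{Pwt1} then forces $a\models\mathfrak p_{\strok AD}$, and the maximality built into $D$ gives $\mathfrak p_{\strok AD}\vdash\mathfrak p_{\strok A\bar c}$, so $a\models\mathfrak p_{\strok A\bar c}$. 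This yields $\mathfrak p_{\strok A\bar c}\nwor\mathfrak q_{\strok A\bar c}$ (witnessed by $\varphi$) for every $\bar c$, and the global conclusion $\mathfrak p\nor\mathfrak q$ then follows by the routine compactness argument you were gesturing at. The key device missing from your attempt is the auxiliary set $D$ that absorbs the arbitrary new parameter $\bar c$ while leaving $a$ generic over it; a single application of the weight-one property in one direction, as in your step two, is not a substitute for it.
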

\begin{proof}
Assume  $\mathfrak p_{\strok A} \nwor \mathfrak q_{\strok A}$ and let us prove
that $\mathfrak p_{\strok A\bar c} \nwor
\mathfrak q_{\strok A\bar c}$ holds for all tuples $\bar c$.
Witness $\mathfrak p_{\strok A} \nwor \mathfrak q_{\strok A}$ by
$a\models \mathfrak p_{\strok A}$, $\bar b\models\mathfrak
q_{\strok A}$ and $\varphi(x,\bar y)\in \tp(a,\bar b/A)$ such that
$\varphi(x,\bar b)\notin\mathfrak p_{\strok A\bar b}$.

For each $\phi(x,\bar c)\in \mathfrak p_{\strok A\bar c}$ pick a
Morley sequence of maximal possible length $(a^{\phi}_i|i\leq
n_{\phi})$ in $\mathfrak p$ over $A$ such that $\models
\bigwedge_{i}\neg\phi(a^{\phi}_i,\bar c)$. Since $\mathfrak p$ is
generically stable each of the chosen sequences is finite. Let $D$
be the union of all them and, without loss of generality, assume
$\bar b\models \mathfrak q_{\strok AD\bar c}$. We {\em claim} that
$a$ realizes $\mathfrak p_{\strok AD}$. Otherwise, there would be
$\bar d\in D$ such that $a\nmodels \mathfrak p_{\strok A\bar d}$.
Moreover, such a $\bar d$ can be chosen $\cl_{\mathfrak
p,A}$-independent, i.e. realizing a Morley sequence  in $\mathfrak
p$ over $A$. Then we would have:
\begin{center}
 $\bar d,\bar b\models \mathfrak p^n\otimes\mathfrak q_{\strok A}$, \ $a\nmodels \mathfrak p_{\strok A\bar d}$ \ and \ $a\nmodels \mathfrak p_{\strok A\bar b}$
\end{center}
which contradicts Proposition \ref{Pwt1} and proves the claim: $a\models \mathfrak p_{\strok AD}$.

To complete the proof it remains to note that $\mathfrak p_{\strok AD}\vdash \mathfrak p_{\strok A\bar c}$ holds by our choice of $D$; thus $a\models \mathfrak p_{\strok A\bar c}$. Summing up, we have: $\bar b\models \mathfrak q_{\strok A\bar c}$, $a\models \mathfrak p_{\strok A\bar c}$ and $a\nmodels\mathfrak p_{\strok A\bar b}$. Thus $\mathfrak p_{\strok A\bar c} \nwor
\mathfrak q_{\strok A\bar c}$. 
\end{proof}

\begin{cor}\label{Cworomega}
Suppose that $\mathfrak p$ is generically stable and regular over $A$,
and that $\mathfrak q$ is $A$-invariant. Then $\mathfrak p\perp
\mathfrak q$ implies \ $\mathfrak p^{\omega}_{\strok C}\wor
\mathfrak q^{\omega}_{\strok C}$ \ for all $C\supseteq A$.
\end{cor}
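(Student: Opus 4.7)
The plan is to reduce everything to Proposition \ref{Pwor=or} by a double induction on the exponents, and then pass to $\omega$ by compactness. The essential observation that will be used repeatedly is the following elementary transfer property: if $p_{\strok C}\wor q_{\strok C}$ holds for complete types over $C$, then any $a\models p_{\strok C}$ automatically realizes $p_{\strok C\bar b}$ for every $\bar b\models q_{\strok C}$, since weak orthogonality forces $\tp(a/C\bar b)$ to be the unique extension of $p_{\strok C}$ compatible with $\tp(\bar b/C)$.

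First, I would prove by induction on $n\geq 1$ that $\mathfrak{p}_{\strok C}\wor \mathfrak{q}^n_{\strok C}$ holds for every $C\supseteq A$. The base case $n=1$ is Proposition \ref{Pwor=or} together with the remark (noted after Definition \ref{DefPTreg}) that regularity over $A$ persists to any $C\supseteq A$. For the inductive step, pick $a\models \mathfrak{p}_{\strok C}$ and $(\bar b_1,\ldots,\bar b_{n+1})\models \mathfrak{q}^{n+1}_{\strok C}$. The base case applied over $C$ yields $a\models \mathfrak{p}_{\strok C\bar b_1}$ by the transfer property above, while by definition of the product $(\bar b_2,\ldots,\bar b_{n+1})\models \mathfrak{q}^n_{\strok C\bar b_1}$. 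The induction hypothesis applied over $C\bar b_1$ then determines $\tp(a,\bar b_2,\ldots,\bar b_{n+1}/C\bar b_1)$, and combining with the base case (which determines $\tp(a,\bar b_1/C)$) gives the required weak orthogonality.

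Next, I would iterate the same pattern on $m$ to show $\mathfrak{p}^m_{\strok C}\wor \mathfrak{q}^n_{\strok C}$ for all $m,n\geq 1$ and every $C\supseteq A$. The case $m=1$ is the previous step. For $m\to m+1$, pick $a_1\models \mathfrak{p}_{\strok C}$ and $(a_2,\ldots,a_{m+1})\models \mathfrak{p}^m_{\strok Ca_1}$; the previous step together with the transfer property ensures $(\bar b_1,\ldots,\bar b_n)\models \mathfrak{q}^n_{\strok Ca_1}$, and the induction hypothesis applied over $Ca_1$ then completes $\tp(a_1,\ldots,a_{m+1},\bar b_1,\ldots,\bar b_n/C)$.

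Finally, the $\omega$-case is immediate by compactness: any formula in the language of $\mathfrak{p}^\omega_{\strok C}(\bar x)\cup \mathfrak{q}^\omega_{\strok C}(\bar y)$ involves only finitely many of the variables, hence is decided by some $\mathfrak{p}^m_{\strok C}\cup \mathfrak{q}^n_{\strok C}$, which is complete by the previous step. The main obstacle is really just bookkeeping: one must carefully verify at each stage that the truncated tuples genuinely realize the expected product types over the enlarged parameter sets, but this is exactly what the transfer property guarantees. Note in particular that we never need the potentially delicate assertion that $\mathfrak{p}^\omega$ itself is regular or generically stable.
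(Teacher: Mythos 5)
Your proof is correct and matches the paper's intent: the paper's own proof is literally one line, ``Easy induction using Proposition~\ref{Pwor=or},'' and your double induction on the two exponents, together with the transfer property for weakly orthogonal invariant types and the final compactness step, is a faithful and careful unwinding of exactly that induction. No genuinely different ideas are introduced; you have simply supplied the bookkeeping that the paper leaves implicit.
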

\begin{proof}
Easy induction using Proposition \ref{Pwor=or}.
\end{proof}

\noindent{\em  Proof of Theorem \ref{Tomit}.} \ Suppose that $A$ is countable and that $\{\mathfrak
 p_i \,|\,i\in I\}$ is a countable family of pairwise
orthogonal, regular over $A$, generically stable types. Assume that each $\mathfrak p_{i\,\strok A}$ is non-isolated. Let $f:I\longrightarrow \omega$. We will prove that there is a countable
$M_f\supseteq A$ such that $\dim_{\mathfrak p_i}(M_f/A)=f(i)$ for all $i\in I$.

\smallskip
 For each $i\in I$ for which $f(i)\neq 0$ holds choose a Morley sequence
$J_i=(a_j^i\,|\,1\leq j\leq f(i))$ in $\mathfrak p_i$ over $A$.
Let $J$ be the union of all the chosen sequences. By Lemma
\ref{Lnonisol} each $p_i=\mathfrak p_{i\,\strok AJ}$ is
non-isolated so, by the Omitting Types Theorem,  there is a
countable $M_f\supseteq AJ$ which omits all the $p_i$'s. We will
prove that $M_f$ is the desired model.  $J_i\subseteq M_f$ implies $\dim_{\mathfrak p_i}(M_f/A)\geq f(i)$. To prove that the equality holds, it suffices to show that each $\mathfrak p_{i\,\strok AJ_i}$ is omitted in $M_f$. By repeatedly applying
Corollary \ref{Cworomega}  we get $\mathfrak p_{i\,\strok
AJ_i}\vdash p_i$. Thus any realization of $\mathfrak p_{i\,\strok
AJ_i}$ in $M_f$ also realizes $p_i$. The latter type is omitted in
$M_f$, so $\mathfrak p_{i\,\strok AJ_i}$ is omitted in $M_f$, too.
Therefore  $\dim_{\mathfrak p_i}(M_f/A)= f(i)$ completing the
proof of the theorem.\qed

\end{document}